\setlist[enumerate,1]{label=\textup{(\roman*)}}
	\def\MR#1{}
\theoremstyle{plain}
\newtheorem{thm}{Theorem}[section]
\newtheorem{lem}{Lemma}[section]
\newtheorem{prp}{Proposition}[section]
\theoremstyle{definition}
\newtheorem{defn}{Definition}[section]
\theoremstyle{remark}
\newtheorem{rem}{Remark}[section]
\newtheorem{exmp}{Example}
\newcommand{\calF}{\mathcal{F}}
\newcommand{\calH}{\mathcal{H}}
\newcommand{\calK}{\mathcal{K}}
\newcommand{\calW}{\mathcal{W}}
\newcommand{\N}{\mathbb{N}}
\newcommand{\Z}{\mathbb{Z}}
\newcommand{\C}{\mathbb{C}}
\newcommand{\R}{\mathbb{R}}
\newcommand{\inner}[1]{\langle#1\rangle}
\newcommand{\ol}{\overline}
\newcommand{\fa}{\forall}
\newcommand{\imp}{\implies}
\newcommand{\what}{\widehat}
\newcommand{\del}{\delta}
\newcommand{\oset}{\overset}
\newcommand{\hoi}[1]{[#1)}
\newcommand{\es}{\emptyset}
\newcommand{\dlim}{\lim\limits}
\newcommand{\bd}{\partial}
\newcommand{\ninf}{{n\to\infty}}
\newcommand{\blt}{\ensuremath{\bullet}}
\DeclareMathOperator{\ran}{ran}
\DeclareMathOperator{\cspan}{\overline{span}}
\DeclareMathOperator{\Span}{span\,}
\DeclareMathOperator{\med}{med}
\begin{document}

	\title{Inverses of integral transforms of RKHSs}
	\author{Akira Yamada}
	\address{Tokyo Gakugei University}
	\email{yamada@u-gakugei.ac.jp}
	\thanks{}
	\subjclass[2020]{Primary~46E22, Secondary~42A38}
	\keywords{integral transform, inverse transform, RKHS}

	\begin{abstract}
		The Fourier transform and its inverse are well-known to have complex conjugate
		integral kernels.
		S.~Saitoh demonstrated that this relationship extends to the theory of integral transforms
		of Hilbert spaces of functions under certain conditions.
		In this paper, we derive a necessary and sufficient condition for the inverse
		of an integral transform of a Hilbert space of functions to be represented by
		a complex conjugate integral kernel.
		As an application, we present an alternative proof of Plancherel's theorem
		using the theory of reproducing kernels.
	\end{abstract}

	\maketitle

\section{Introduction}

The Fourier transform $\calF$ and its inverse transform $\calF^*$ for $f \in L^1(\R)$
are given by
\begin{align}
	\calF(f)(x)
	&=\frac{1}{\sqrt{2\pi}}\int_\R f(t)e^{-itx} \, dt, \\
	\calF^*(f)(t)
	&=\frac{1}{\sqrt{2\pi}}\int_\R f(x)e^{itx}\, dx,
\end{align}
Note that the integral kernel of the Fourier transform, $e^{-itx}$, and
the integral kernel of the inverse Fourier transform, $e^{itx}$,
are complex conjugates of each other.
Integral transforms in $L^p$ spaces, when abstracted within the framework of Hilbert spaces,
become the integral transforms of Hilbert spaces, which will be discussed later.
In this case, the image space of the integral transform becomes a
reproducing kernel Hilbert space (RKHS),
and it is well known that its reproducing kernel is expressed
by the inner product (Theorem~\ref{th:int-trans}).
In 1982, S.~Saitoh showed in~\cite{Saitoh82} that in the integral transform of Hilbert spaces,
under certain conditions, the integral kernel of the inverse transform becomes the complex
conjugate of the integral kernel of the original transform,
similar to the case of the Fourier transform described above
(Theorem~\ref{th:inttrf0}, Equation~\eqref{eq:invtrans}).

This paper establishes a close relationship between RKHSs and
the inverse transform formula~\eqref{eq:invtrans} (Theorem~\ref{thm:invtrans}).
We derive a generalized inverse formula~\eqref{eq:invtrans2} as an extension
of~\eqref{eq:invtrans} and apply it to provide an alternative proof of Plancherel's theorem.
Throughout this paper, all operators are linear unless otherwise stated.

\section{Integral transforms of Hilbert spaces}

A Hilbert space $\calH$ of functions on a set $E$ is called an {\em RKHS} if the operator
\[
f\in\calH\mapsto f(x)\in\C
\]
is bounded for any $f\in\calH$ and point $x\in E$.
Then, by Riesz's representation theorem, there exists a function $k_x\in\calH$
such that $f(x)=\inner{f,k_x}$.
This function $k_x$ is called the {\em reproducing kernel} of $\calH$
for the point $x$, and the two-variable function $k(x,y)=\inner{k_y,k_x}$
is called the reproducing kernel of $\calH$.
For more details on RKHS, refer to \cites{Aronszajn50, PaulsenRaghupathi16}.
For the theory and applications of integral transforms of Hilbert spaces, see \cites{Saitoh97, SaitohSawano16} and the references therein.

\begin{defn}
	Let $\calH$ be a Hilbert space and let $V$ be a vector space.
	For a linear map $A\colon\calH\to V$ with closed kernel,
	there exists a unique Hilbert space structure on
	$\ran A=A(\calH)$ of $V$ so that the linear map
	$A\colon\calH\to\ran A$ is a coisometry.
	In other words, we define an inner product on $\ran A$ such that,
	for $f,g\in(\ker A)^\perp$,
	\[
		\inner{Af,Ag}_{\ran A}=\inner{f,g}_\calH.
	\]
	This Hilbert space is called the {\em operator range}
	of the linear map $A$ (cf.~\cite{Sarason94})
\end{defn}

Let $\phi\colon E\to\calH$ be a map from a set $E$ to a Hilbert space $\calH$.
Define an operator $\hat\phi\colon\calH\to\C^E$ by, for $f\in\calH$,
\[\label{sec:int_trans}
(\hat\phi f)(x)=\inner{f,\phi(x)}_\calH,\quad x\in E.
\]
Note that $\ker\hat\phi=\phi(E)^\perp$ is closed in $\calH$.
The operator range $\hat\phi(\calH)$ of the map $\hat\phi$ is a RKHS on $E$,
and the operator $\hat\phi\colon\calH\to\hat\phi(\calH)$ is called
the {\em integral transform} of $\calH$ induced by the map $\phi$.
If the map $\phi$ is clear from the context, we write with $\hat f=\hat\phi f$
and $\hat\calH=\hat\phi(\calH)$.

\begin{thm}[\cite{Saitoh97}*{pp.~20--23}] \label{th:int-trans}
	Under the above settings, the following hold:
	\begin{enumerate}
		\item For any $f\in\calH$, we have $\|\hat f\|_{\hat\calH}\le\|f\|_\calH$.
				Equality holds if and only if $f\in(\ker\hat\phi)^\perp=\cspan\phi(E)$,
				where $\cspan\phi(E)$ denotes the closed linear span of the set $\phi(E)$.
				In particular, $\hat\phi\colon\calH\to\hat\calH$ is an isometry if and only if
				$\phi(E)$ is complete in $\calH$, i.e., the linear span of $\phi(E)$ is dense
				in $\calH$.
		\item The integral transform $\hat\calH$ of $\calH$ is a RKHS on $E$,
				and its reproducing kernel for the point $y\in E$ is the function $\what{\phi(y)}$.
				Moreover, the reproducing kernel of $\hat\calH$ is given by
				\begin{equation}\label{eq:phi-kernel}
					\inner{\phi(y),\phi(x)}_\calH.
				\end{equation}
	\end{enumerate}
\end{thm}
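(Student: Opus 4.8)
The plan is to reduce everything to the defining property of the operator range: $\hat\phi$ restricted to $(\ker\hat\phi)^\perp$ is a unitary isomorphism onto $\hat\calH$. The single fact that drives both parts is the inclusion $\phi(E)\subseteq(\ker\hat\phi)^\perp$; indeed, since $\ker\hat\phi=\phi(E)^\perp$ (as already noted before the statement), we have $(\ker\hat\phi)^\perp=(\phi(E)^\perp)^\perp=\cspan\phi(E)\supseteq\phi(E)$. I would record this identity at the outset, since it also supplies the description of the equality set in (i).

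For (i), I would take $f\in\calH$, write $f=f_0+f_1$ with $f_0\in(\ker\hat\phi)^\perp$ and $f_1\in\ker\hat\phi$, and observe that $\hat f=\hat\phi f_0$ because $\hat\phi f_1=0$. The coisometry relation $\inner{\hat\phi g,\hat\phi h}_{\hat\calH}=\inner{g,h}_\calH$ for $g,h\in(\ker\hat\phi)^\perp$ then gives $\|\hat f\|_{\hat\calH}^2=\|f_0\|_\calH^2\le\|f_0\|_\calH^2+\|f_1\|_\calH^2=\|f\|_\calH^2$, with equality exactly when $f_1=0$, i.e.\ $f\in(\ker\hat\phi)^\perp=\cspan\phi(E)$. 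The ``in particular'' clause is then immediate: $\hat\phi$ is isometric on all of $\calH$ iff $\ker\hat\phi=\{0\}$ iff $\phi(E)^\perp=\{0\}$ iff $\Span\phi(E)$ is dense in $\calH$.

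For (ii), I would fix $y\in E$ and test the candidate $\what{\phi(y)}=\hat\phi(\phi(y))$, which lies in $\hat\calH$ since $\phi(y)\in\calH$. Given $\hat f\in\hat\calH$ with $f=f_0+f_1$ as above, compute $\hat f(y)=\inner{f,\phi(y)}_\calH=\inner{f_0,\phi(y)}_\calH$ (using $f_1\perp\phi(y)$), and then, since $f_0$ and $\phi(y)$ both lie in $(\ker\hat\phi)^\perp$, apply the coisometry relation to get $\inner{f_0,\phi(y)}_\calH=\inner{\hat\phi f_0,\hat\phi\phi(y)}_{\hat\calH}=\inner{\hat f,\what{\phi(y)}}_{\hat\calH}$. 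Thus $\hat f(y)=\inner{\hat f,\what{\phi(y)}}_{\hat\calH}$ for all $\hat f\in\hat\calH$, which simultaneously shows that point evaluation at $y$ is bounded (so $\hat\calH$ is an RKHS) and identifies $\what{\phi(y)}$ as its reproducing kernel at $y$. Finally, the two-variable reproducing kernel is $\inner{\what{\phi(y)},\what{\phi(x)}}_{\hat\calH}$, which by a last application of the coisometry relation (both arguments in $(\ker\hat\phi)^\perp$) equals $\inner{\phi(y),\phi(x)}_\calH$ — consistently with evaluating $\what{\phi(y)}$ at $x$ directly from the definition of $\hat\phi$.

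I do not expect a genuine obstacle here: once the decomposition $\calH=(\ker\hat\phi)^\perp\oplus\ker\hat\phi$ is in place, the rest is bookkeeping with the coisometry relation together with the inclusion $\phi(E)\subseteq(\ker\hat\phi)^\perp$. The only point deserving care is that $\hat\calH$ is well defined and complete as a Hilbert space; this is exactly the operator-range construction invoked in the hypotheses, valid because $\ker\hat\phi=\phi(E)^\perp$ is closed, and so it may simply be cited rather than reproved.
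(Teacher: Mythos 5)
Your proof is correct. Note, however, that the paper does not prove this theorem at all: it is quoted from Saitoh's book (pp.~20--23) and used as a black box, so there is no in-paper argument to compare against. Your route --- the orthogonal decomposition $f=f_0+f_1$ along $\calH=(\ker\hat\phi)^\perp\oplus\ker\hat\phi$, the identity $(\ker\hat\phi)^\perp=\cspan\phi(E)\supseteq\phi(E)$, and repeated use of the coisometry relation from the operator-range construction --- is the standard argument for this result and fills in the omitted proof cleanly, including the well-definedness point ($\hat f(y)$ depends only on $f_0$) and the identification of the two-variable kernel $\inner{\phi(y),\phi(x)}_\calH$.
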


The following is a prototype of the inversion theorem~\cite{Saitoh82}*{Theorem 3.2}
for the integral transforms of Hilbert spaces.

\begin{thm}[\cite{Saitoh82}*{Theorem 3.1}] \label{th:inttrf0}
	Let $\phi\colon E\to\calH$ be a map from a set $E$ to a Hilbert space $\calH$
	of functions on a set $T$,
	and let $\hat\calH$ be the integral transform of $\calH$ induced by $\phi$.
	Then, if all of the conditions
  \begin{enumerate}
		\item $\phi(E)$is complete in $\calH$,
    \item $\ol{\phi(t,\cdot)}\in\hat\calH$, for any $t\in T$,
    \item $\inner{\hat f,\ol{\phi(t,\cdot)}}_{\hat\calH}\in\calH$,
					for any $f\in\calH$ and $t\in T$,
    \item $\inner{\hat f(x),\inner{\phi(y),\phi(x)}_\calH}_{\hat\calH}
     		=\inner{\inner{\hat f(x),\ol{\phi(t,x)}}_{\hat\calH},\phi(t,y)}_\calH$,
				for any $f\in\calH$ and $x,y\in E$,
  \end{enumerate}
  are satisfied, the inverse of the integral transform $\hat\phi$
	is given by
  \begin{equation}\label{eq:invtrans}
    f(t)=\inner{\hat f,\ol{\phi(t,\cdot)}}_{\hat\calH},\quad t\in T.
  \end{equation}
\end{thm}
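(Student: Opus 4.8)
The plan is to show that the right-hand side of~\eqref{eq:invtrans}, regarded as a function of $t\in T$, is an element of $\calH$ that is sent by $\hat\phi$ to $\hat f$; since condition~(i) makes $\hat\phi$ injective, this element must then be $f$ itself.

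First I would fix $f\in\calH$ and define, for $t\in T$,
\[
	F(t):=\inner{\hat f,\ol{\phi(t,\cdot)}}_{\hat\calH}.
\]
Condition~(ii) guarantees that the inner product on the right is meaningful for every $t$, and condition~(iii) says exactly that $F\in\calH$, so $f\mapsto F$ is a well-defined linear map $\calH\to\calH$ and the assertion~\eqref{eq:invtrans} is precisely $F=f$. I would then compute $\hat F\in\hat\calH$ by comparing the two sides of~(iv). On the left, the function $x\mapsto\inner{\phi(y),\phi(x)}_\calH$ is, by Theorem~\ref{th:int-trans}(ii), the reproducing kernel $\what{\phi(y)}$ of $\hat\calH$ at the point $y$, so the reproducing property gives
\[
	\inner{\hat f(x),\inner{\phi(y),\phi(x)}_\calH}_{\hat\calH}
	=\inner{\hat f,\what{\phi(y)}}_{\hat\calH}=\hat f(y).
\]
On the right, the inner $\hat\calH$-product (taken over $x$) is nothing but $F(t)$, and the function $t\mapsto\phi(t,y)$ is the element $\phi(y)\in\calH$, so the right-hand side of~(iv) equals $\inner{F,\phi(y)}_\calH=\hat F(y)$. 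Hence~(iv) reads $\hat F(y)=\hat f(y)$ for all $y\in E$, i.e.\ $\hat F=\hat f$ in $\hat\calH$.

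Finally, condition~(i) together with Theorem~\ref{th:int-trans}(i) shows that $\hat\phi\colon\calH\to\hat\calH$ is an isometry, in particular injective, so $\hat F=\hat f$ forces $F=f$, which is~\eqref{eq:invtrans}. I expect no deep obstacle here; the one place requiring care is the bookkeeping in step~(iv) — keeping straight which variable each iterated inner product runs over, and making the identifications $x\mapsto\inner{\phi(y),\phi(x)}_\calH=\what{\phi(y)}$ and $t\mapsto\phi(t,y)=\phi(y)$. Once those are in place, conditions~(ii)--(iv) collapse into the single equality $\hat F=\hat f$, and~(i) does the rest.
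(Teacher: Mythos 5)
Your argument is correct. The paper itself gives no proof of this statement (it is quoted from Saitoh's 1982 paper), so there is nothing internal to compare against line by line; but your proof is exactly the natural direct verification, and every step checks out: condition (iii) makes $F(t)=\inner{\hat f,\ol{\phi(t,\cdot)}}_{\hat\calH}$ an element of $\calH$; the left side of (iv) is $\inner{\hat f,\what{\phi(y)}}_{\hat\calH}=\hat f(y)$ by Theorem~\ref{th:int-trans}(ii); the right side is $\inner{F,\phi(y)}_\calH=\hat F(y)$; and (i) gives $\ker\hat\phi=\phi(E)^\perp=\{0\}$, so $\hat F=\hat f$ forces $F=f$. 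Two minor remarks: you only need injectivity of $\hat\phi$, which follows from $\phi(E)^\perp=\{0\}$, not the full isometry statement; and the linearity of $f\mapsto F$ plays no role. It is also worth noting how this route differs from the mechanism the paper uses for its own Main Theorem~\ref{thm:invtrans}: there the inversion formula is obtained structurally, by recognizing $\cspan\phi(E)$ as an RKHS on $T$ and writing $\ol{\phi(t,\cdot)}=\hat k_t$ for its kernel, with no analogue of hypotheses (iii)--(iv); your computation is instead the classical Saitoh argument in which (iv) is engineered precisely so that it collapses to $\hat F=\hat f$. The structural route buys a characterization (necessity as well as sufficiency), while the direct route buys a short verification under the stated, admittedly restrictive, hypotheses.
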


Although the hypotheses are somewhat restrictive,
Theorem\ref{th:inttrf0} can be regarded as a generalization of the inverse
Fourier transform outlined in Introduction.

\section{Main Results}

The following is our main result concerning the inverse transform.

\begin{thm}[Main Theorem]\label{thm:invtrans}
	Let $E$ be a set, and let $\calH$ be a Hilbert space of functions defined on a set $T$.
	Denote by $\hat\calH$ the integral transform of $\calH$ induced by
	a map $\phi\colon E\to\calH$,
	and let $\phi(t,x)=\phi(x)(t)$ for $(t,x)\in T\times E$.
	Then, the following are equivalent:
	\begin{enumerate}
		\item For every $t \in T$ we have $\ol{\phi(t,\cdot)}\in\hat\calH$,
				and the inverse transform formula~\eqref{eq:invtrans} holds for all
				$f\in\cspan\phi(E)$.
		\item The closed subspace $\cspan\phi(E)$ of $\calH$ is a RKHS on $T$.
	\end{enumerate}
	In particular, if $\calH$ is an RKHS on $T$ and $\phi(E)$ is complete in $\calH$,
	then the inverse transform formula \eqref{eq:invtrans} holds for any $f \in \calH$.
\end{thm}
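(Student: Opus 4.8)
The plan is to collapse both statements into one fact about the closed subspace $M:=\cspan\phi(E)=(\ker\hat\phi)^\perp$ of $\calH$: namely, whether the point evaluations $f\mapsto f(t)$, $t\in T$, are bounded on $M$, i.e.\ whether $M$ (with the norm inherited from $\calH$) is a RKHS on $T$. The engine will be the observation — essentially Theorem~\ref{th:int-trans}(i) together with the definition of the operator range — that $\hat\phi|_M\colon M\to\hat\calH$ is a \emph{unitary}: it is isometric because $\calH=M\oplus\ker\hat\phi$ realizes the operator-range norm, and it is onto because $\hat\phi(\calH)=\hat\phi(M)$. In particular $\inner{\hat f,\hat g}_{\hat\calH}=\inner{f,g}_\calH$ for all $f,g\in M$, and every element of $\hat\calH$ equals $\hat g$ for a unique $g\in M$. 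I would also note at the outset that $\phi(x)\in M$ for every $x\in E$, so that any reproducing relation on $M$ can be tested against the vectors $\phi(x)$.

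For (i)$\Rightarrow$(ii) I would fix $t\in T$, use $\ol{\phi(t,\cdot)}\in\hat\calH$ to write it as $\hat g_t$ with $g_t\in M$, and then for $f\in M$ compute, via unitarity and~\eqref{eq:invtrans},
\[
	\inner{f,g_t}_\calH=\inner{\hat f,\hat g_t}_{\hat\calH}=\inner{\hat f,\ol{\phi(t,\cdot)}}_{\hat\calH}=f(t).
\]
Thus $f\mapsto f(t)$ is bounded on $M$ with Riesz representer $g_t$, so $M$ is a RKHS on $T$.

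For (ii)$\Rightarrow$(i) I would fix $t\in T$, let $h_t\in M$ be the reproducing kernel of $M$ at $t$, and identify $\hat h_t$ by testing the relation $f(t)=\inner{f,h_t}_\calH$ against $f=\phi(x)\in M$:
\[
	(\hat h_t)(x)=\inner{h_t,\phi(x)}_\calH=\ol{\inner{\phi(x),h_t}_\calH}=\ol{\phi(x)(t)}=\ol{\phi(t,x)}.
\]
Hence $\hat h_t=\ol{\phi(t,\cdot)}\in\hat\calH$, and for $f\in M$ unitarity gives $\inner{\hat f,\ol{\phi(t,\cdot)}}_{\hat\calH}=\inner{\hat f,\hat h_t}_{\hat\calH}=\inner{f,h_t}_\calH=f(t)$, which is~\eqref{eq:invtrans}. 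The ``in particular'' clause then follows at once: if $\calH$ is a RKHS on $T$ and $\phi(E)$ is complete then $M=\calH$ is a RKHS on $T$, so (ii) holds, and (i) delivers~\eqref{eq:invtrans} for every $f\in M=\calH$.

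I do not expect a genuine obstacle here: once the unitarity of $\hat\phi|_M$ is in place the rest is bookkeeping. The two points that need a little care are (a) invoking the operator-range structure in the stronger polarized form $\inner{\hat f,\hat g}_{\hat\calH}=\inner{f,g}_\calH$ on $M$, not merely the norm equality of Theorem~\ref{th:int-trans}(i); and (b) the elementary but essential remark that $\phi(x)\in M$, which is exactly what lets the abstract reproducing kernel $h_t$ of $M$ be transported by $\hat\phi$ onto the concrete kernel $\ol{\phi(t,\cdot)}$.
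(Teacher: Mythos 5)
Your proposal is correct and follows essentially the same route as the paper: both directions hinge on the fact that $\hat\phi$ restricted to $M=\cspan\phi(E)=(\ker\hat\phi)^\perp$ is an isometric isomorphism onto $\hat\calH$, and your (ii)$\Rightarrow$(i) computation transporting the kernel $h_t$ via $\hat\phi$ onto $\ol{\phi(t,\cdot)}$ is the paper's argument almost verbatim. The only (harmless) divergence is in (i)$\Rightarrow$(ii): the paper just bounds $|f(t)|\le\|\ol{\phi(t,\cdot)}\|_{\hat\calH}\|\hat f\|_{\hat\calH}\le\|\ol{\phi(t,\cdot)}\|_{\hat\calH}\|f\|_\calH$ using the contraction property and Schwarz's inequality, whereas you use surjectivity of $\hat\phi|_M$ to exhibit the Riesz representer $g_t$ explicitly; both are valid one-line arguments.
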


\begin{proof}
	(i)$\imp$(ii)
	By Theorem~\ref{th:int-trans}, the integral transform is a contraction.
	Thus, from the identity~\eqref{eq:invtrans} and Schwarz's inequality,
	for $f \in \cspan\phi(E)$ and $t \in T$, we have
	\[
		|f(t)|\le\|\ol{\phi(t,\cdot)}\|_{\hat\calH}\|\hat f\|_{\hat\calH}
		\le\|\ol{\phi(t,\cdot)}\|_{\hat\calH}\|f\|_\calH.
	\]
	Therefore, $\cspan\phi(E)$ is a RKHS on $T$.

	(ii)$\imp$(i)
	Let $k_t$ denote the reproducing kernel of the RKHS $\cspan \phi(E)$ for the point $t \in T$. Since $k_t \in \calH$, for any $x \in E$, we have
	\[
	\ol{\phi(t,x)}=\ol{\inner{\phi(x),k_t}}_\calH=\inner{k_t,\phi(x)}_\calH
	=\hat{k}_t(x).
	\]
	Thus, $\ol{\phi(t,\cdot)} = \hat k_t \in \hat\calH$.
	Moreover, since the integral transform $\hat\phi$ is an isometric isomorphism on
	$\cspan\phi(E)=(\ker\hat\phi)^\perp$, for any $f\in\cspan\phi(E)$, we have
	\[
		f(t)=\inner{f,k_t}_\calH=\inner{\hat f,\hat k_t}_{\hat\calH}
		=\inner{\hat f,\ol{\phi(t,\cdot)}}_{\hat\calH}.
	\]
	Therefore,~\eqref{eq:invtrans} holds.
	The final assertion follows because the completeness of $\phi(E)$ in $\calH$ is
	equivalent to $\cspan\phi(E)=\calH$.
\end{proof}

To apply Theorem~\ref{thm:invtrans}, it is necessary to know explicitly
the inner product of the image $\hat\calH$ of the integral transform.
To this end, we prepare the following lemma, which provides a sufficient condition
for the norm of an RKHS to be expressed in terms of the norm of another space.

\begin{lem}\label{lem:iso-met}
	Let $H$ be an RKHS on a set $E$ with reproducing kernel $k$,
	and let $F$ be a subset of $E$.
	Denote by $k_y=k(\cdot, y)$ the reproducing kernel for a point $y \in E$.
	Assume the following conditions:
	\begin{enumerate}
		\item If $f\in H$ and $f|_F=0$, then $f=0$, i.e., $F$ is a uniqueness set
				for $H$.
		\item There exists a map $T\colon\{k_x\}_{x \in F}\to K$ to a Hilbert space
				$K$ such that, for all $x, y \in F$,
				\begin{equation}\label{eq:k(x,y)iso}
					k(x,y)=\inner{Tk_y,Tk_x}_K.
				\end{equation}
	\end{enumerate}
	Then, $T$ extends uniquely to an isometry $\tilde T$ from $H$ to $K$,
	and $H$ is isometrically isomorphic to a closed subspace of $K$.
	In particular, if $K=L^2(F, d\mu)$ and $T$ is defined by the restriction to $F$,
	then the extended isometry $\tilde T$ coincides with the restriction of functions
	in $H$ to $F$.
\end{lem}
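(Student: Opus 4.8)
The plan is to use hypothesis~(i) to make $\{k_x\}_{x\in F}$ a total set in $H$, and then to notice that the kernel identity~\eqref{eq:k(x,y)iso} automatically forces the linear extension of $T$ to be isometric on the span of that total set; a continuity argument then finishes. So the first step is to record that $F$ being a uniqueness set is equivalent to $\cspan\{k_x:x\in F\}=H$: indeed, if $g\in H$ is orthogonal to $k_x$ for every $x\in F$, then $g(x)=\inner{g,k_x}_H=0$ on $F$, whence $g=0$ by~(i).

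The second step is to define $\tilde T$ on $\Span\{k_x:x\in F\}$ by $\tilde T\bigl(\sum_i a_ik_{x_i}\bigr)=\sum_i a_iTk_{x_i}$ and to verify, using~\eqref{eq:k(x,y)iso} together with the reproducing property $\inner{k_{x_i},k_{x_j}}_H=k(x_j,x_i)$, that
\[
	\Bigl\|\sum_i a_iTk_{x_i}\Bigr\|_K^2=\sum_{i,j}a_i\ol{a_j}\,k(x_j,x_i)=\Bigl\|\sum_i a_ik_{x_i}\Bigr\|_H^2 .
\]
This single computation shows at once that $\tilde T$ is well defined (any finite combination that vanishes in $H$ is sent to $0$ in $K$) and that it is isometric on the dense subspace $\Span\{k_x:x\in F\}$; hence it extends uniquely to an isometry $\tilde T\colon H\to K$. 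Since the image of a Hilbert space under an isometry is complete, and therefore closed, $\tilde T$ is an isometric isomorphism of $H$ onto the closed subspace $\tilde T(H)\subseteq K$.

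For the final assertion, take $K=L^2(F,d\mu)$ and $Tk_x=k_x|_F$. Given $f\in H$, choose $f_n\in\Span\{k_x:x\in F\}$ with $f_n\to f$ in $H$. Then on one hand $f_n|_F=\tilde Tf_n\to\tilde Tf$ in $L^2(F,d\mu)$, so a subsequence converges to $\tilde Tf$ pointwise $\mu$-a.e.; on the other hand, norm convergence in the RKHS $H$ implies pointwise convergence, so $f_n(x)\to f(x)$ for every $x\in F$. Comparing the two limits identifies $\tilde Tf$ with the class of $f|_F$ in $L^2(F,d\mu)$, which is the claim. Reconciling the $L^2$-limit with the everywhere-defined pointwise limit is the only step that needs care; everything else is the standard ``isometry from a kernel identity on a total set'' argument, with hypothesis~(i) supplying exactly the totality that is needed.
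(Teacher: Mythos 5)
Your proof is correct and is essentially the paper's own argument: hypothesis (i) yields density of $\Span\{k_x\colon x\in F\}$ in $H$, the Gram-matrix computation from \eqref{eq:k(x,y)iso} shows the linear extension is well defined and isometric on that span, continuity gives the unique isometric extension with closed (complete) range, and the $L^2(F,d\mu)$ case is identified by comparing pointwise convergence in the RKHS with an a.e.-convergent subsequence of the $L^2$-limit. The only difference is structural and minor: you define $\tilde T$ directly on the kernel span inside $H$, whereas the paper first factors through the restriction RKHS $H|_F$ and the operator-range isometry $\rho\colon H\to H|_F$ before composing; the substance is the same.
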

	
\begin{proof}
	The space $H|_F$ of restrictions of functions in $H$ to $F$ is an RKHS on $F$,
	with the restriction of $k$ to $F \times F$ as its reproducing kernel.
	This RKHS structure is induced by the operator range of the restriction operator
	$\rho\colon H\to H|_F$.
	Since $\rho$ is injective by (i), $\rho\colon H\to H|_F$ is an isometry
	by the definition of the operator range.
	Moreover, from (ii), for $c_i \in \mathbb{C}$ and $x_i \in F$, we have
	\[
	\Bigl\|\sum_ic_ik_{x_i}|_F\Bigr\|_{H|_F}^2=\sum_{i,j}c_i\ol c_jk(x_j,x_i)
	=\sum_{i,j}c_i\ol c_j\inner{Tk_{x_i},Tk_{x_j}}_K=\Bigl\|\sum_ic_iTk_{x_i}\Bigr\|_K^2.
	\]
	This identity implies that if $\sum_ic_ik_{x_i}|F=0$, then $\sum_ic_iTk{x_i}=0$.
	Thus, the mapping
	\[
		\tilde T\colon\sum_ic_ik_{x_i}|_F\in H|_F\mapsto\sum_ic_iTk_{x_i}\in K
	\]
	is a well-defined linear isometry extending the map $T$.
	By (i), the set $\{k_x\}_{x \in F}$ is complete in $H$,
	which means that the set $\Span\{k_x\}_{x\in F}$ is dense in $H$.
	Consequently, the composition $\tilde T\rho\colon\Span\{k_x\}_{x\in F}\to K$
	extends uniquely to an isometry from $H$ to $K$.
	Thus, $H$ is isometrically isomorphic to the closed subspace of $K$.
	The last assertion can be seen from the fact that, since $H$ is an RKHS,
	strong convergence implies pointwise convergence,
	and in $L^p$ spaces mean convergence implies the existence of a subsequence
	that converges almost everywhere.
\end{proof}
	
The inverse transform formula~\eqref{eq:invtrans} holds for
the integral transform of an RKHS.
Even for a general Hilbert space, a similar formula can be obtained
if there exists an RKHS which is isometric to it.
For convenience we make the following definition.

\begin{defn}
	Let $\calH$ be a Hilbert space and let $\phi$ be a map $\phi\colon E\to\calH$.
	If $\calW$ is an RKHS on a set $T$, and if $S\colon\calH\to\calW$ is an isometry,
	then the composite mapping $E\oset{\phi}{\to}\calH\oset{S}{\to}\calW$
	is called a	{\em transformation sequence} on $T$.
\end{defn}

\begin{thm}[Inverse Transform Theorem]\label{thm:invtrans2}
	Let $E\oset{\phi}{\to}\calH\oset{S}{\to}\calW$ be a transformation sequence on $T$.
	Let $\hat\calH$ denote the integral transform of $\calH$ induced by $\phi$.
	Then, for all $f\in\cspan\phi(E)$ and $t\in T$, the following identity holds:
	\begin{equation}\label{eq:invtrans2}
		(Sf)(t) = \inner{\hat f, \ol{(S\phi)(t, \cdot)}}_{\hat\calH}.
	\end{equation}
\end{thm}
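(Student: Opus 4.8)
The plan is to reduce Theorem~\ref{thm:invtrans2} to the Main Theorem (Theorem~\ref{thm:invtrans}) by observing that a transformation sequence is nothing but a way of realizing $\cspan\phi(E)$ (or a suitable isometric copy of it) as an RKHS. The key point is that the isometry $S\colon\calH\to\calW$ lets us transport the reproducing-kernel structure of $\calW$ back to $\calH$, so that the hypothesis (ii) of Theorem~\ref{thm:invtrans} becomes available — but with one subtlety: the target functions live on $T$ via $Sf$, not via $f$ itself, so I need to pass through the image space $S(\calH)\subseteq\calW$ rather than through $\calH$ directly.

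First I would set $\psi = S\circ\phi\colon E\to\calW$ and note that since $S$ is an isometry, $\inner{\psi(x),\psi(y)}_\calW = \inner{\phi(x),\phi(y)}_\calH$ for all $x,y\in E$. Hence the integral transform $\hat\psi$ of $\calW$ induced by $\psi$ has, by Theorem~\ref{th:int-trans}(ii), the same reproducing kernel $\inner{\phi(y),\phi(x)}_\calH$ as $\hat\phi$, and one checks that $\hat\psi = \hat\phi\circ S^{*}$ on $S(\calH)$ — equivalently $\what{\psi}(Sf) = \hat f$ for every $f\in\calH$, since $(\what{\psi}(Sf))(x) = \inner{Sf,S\phi(x)}_\calW = \inner{f,\phi(x)}_\calH = \hat f(x)$. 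So $\hat\calH = \hat\psi(\calW)$ as RKHSs on $E$, with matching norms on the relevant subspaces. Next I would apply the implication (ii)$\imp$(i) of Theorem~\ref{thm:invtrans} to the map $\psi\colon E\to\calW$: the closed span $\cspan\psi(E) = S(\cspan\phi(E))$ is a closed subspace of the RKHS $\calW$ on $T$, hence is itself an RKHS on $T$ (a closed subspace of an RKHS is an RKHS, with the orthogonal projection of the kernels serving as reproducing kernels). Therefore condition (ii) of the Main Theorem holds for $\psi$, and we conclude that $\ol{\psi(t,\cdot)} = \ol{(S\phi)(t,\cdot)} \in \hat\psi(\calW) = \hat\calH$ and that the inverse formula~\eqref{eq:invtrans} holds for $\psi$ and all $g\in\cspan\psi(E)$: namely $g(t) = \inner{\hat g, \ol{(S\phi)(t,\cdot)}}_{\hat\calH}$.

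Finally I would specialize $g = Sf$ for $f\in\cspan\phi(E)$, which does lie in $\cspan\psi(E)$ since $S$ is an isometric isomorphism onto $S(\calH)$ and therefore maps $\cspan\phi(E)$ onto $\cspan\psi(E)$; using $\hat g = \what{\psi}(Sf) = \hat f$ established above, the formula becomes exactly $(Sf)(t) = \inner{\hat f, \ol{(S\phi)(t,\cdot)}}_{\hat\calH}$, which is~\eqref{eq:invtrans2}.

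**The main obstacle** I expect is purely bookkeeping rather than conceptual: carefully verifying that the integral transform of $\calW$ by $\psi$ restricts correctly to the integral transform of $\calH$ by $\phi$ — i.e. that $\hat\calH$ and $\hat\psi(\calW)$ are the \emph{same} RKHS on $E$ (same functions, same norm), not merely isometric. This hinges on the fact that $\ker\hat\psi = \psi(E)^\perp = S(\phi(E)^\perp)\oplus S(\calH)^\perp$, so that $(\ker\hat\psi)^\perp = S(\cspan\phi(E)) = \cspan\psi(E)$, and $\hat\psi$ restricted there agrees via $S$ with $\hat\phi$ restricted to $\cspan\phi(E)$. Once that identification is pinned down, everything else is a direct citation of Theorem~\ref{thm:invtrans}; no genuinely new estimate is needed. (One should also remark that Theorem~\ref{th:inttrf0} of Saitoh is recovered by taking $\calW = \calH$ an RKHS and $S$ the identity, and that the ``in particular'' clause of Theorem~\ref{thm:invtrans} corresponds to the same choice with $\phi(E)$ complete.)
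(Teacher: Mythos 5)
Your proposal is correct and follows essentially the same route as the paper: both apply the implication (ii)$\imp$(i) of Theorem~\ref{thm:invtrans} to the map $S\phi\colon E\to\calW$, identify $\hat\calH=\what{S\phi}(\calW)$ via equality of reproducing kernels (since $S$ is an isometry), and use $\what{S\phi}(Sf)=\hat f$ together with $S(\cspan\phi(E))=\cspan S\phi(E)$ to conclude. Your version merely spells out a few points the paper leaves implicit (the computation $\what{S\phi}(Sf)=\hat f$ and the fact that a closed subspace of the RKHS $\calW$ is itself an RKHS on $T$), which is fine.
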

	
\begin{proof}
	Both $\hat\calH = \hat\phi(\calH)$ and $\what{S\phi}(\calW)$ are RKHSs on $E$, and their reproducing kernels coincide since $S$ is an isometry.
	Thus, $\hat\calH = \what{S\phi}(\calW)$.
	Since an isometry is a closed map, so $S(\cspan\phi(E)) = \cspan S(\phi(E))$.
	By applying Theorem~\ref{thm:invtrans} to the integral transform induced by
	$S\phi\colon E\to\calW$, for all $f\in\cspan\phi(E)$, we have
	\[
	(Sf)(t)=\inner{\what{S\phi}(Sf),\ol{(S\phi)(t,\cdot)}}_{\hat\calH}.
	\]
	Since $S$ is an isometry, $\what{S\phi}(Sf)=\hat f$.
	This proves the identity~\eqref{eq:invtrans2}.
\end{proof}

Next we give a simple application of the inverse transform theorem.

\begin{exmp}[cf.~\cite{SaitohSawano16}*{pp.~116,~153}]
	Let $\calH$ be a separable Hilbert space,	and let $\{g_n\}_{n=1}^\infty$
	be a complete orthonormal system for $\calH$.
	The Fourier coefficients of $f\in\calH$	with respect to $\{g_n\}$
	are defined as the sequence $(\hat f_n)\in\C^\N$, where
	\[
		\hat f_n=\inner{f,g_n}_\calH.
	\]
	Consider the integral transform $\hat\calH$ of $\calH$ induced by the map
	$\psi\colon E\to\calH$.
	By Parseval's identity, the map
	\[
		S\colon f\in\calH\mapsto(\hat f_n)\in\ell^2
	\]
	is an isometry.
	Since $\ell^2$ is an RKHS on $\N$ with the reproducing kernel $\del_{ij}$ (Kronecker delta),
	the sequence $\N\oset{\psi}\to\calH\oset{S}\to\ell^2$ is a transformation sequence on $\N$.
	From the inverse transform formula~\eqref{eq:invtrans2},
	for all $f\in\cspan\psi(E)$ and $n\in\N$, we have
	\begin{align}
		\hat f_n
		&=\inner{\hat f,\ol{\what{\psi(\cdot)}n}}_{\hat\calH}
			=\inner{\hat f(x),\ol{\inner{\psi(x),g_n}}\calH}_{\hat\calH}. \label{eq:FourHilb}
	\end{align}
	Expressing the inverse transform as a Fourier series, we have that
	for $f\in\cspan\psi(E)$, the following holds in the sense of strong convergence in $\calH$:
	\[
	f=\sum_{n=0}^\infty\inner{\hat f,\ol{\what{\psi(\cdot)}_n}}_{\hat\calH}g_n.
	\]
\end{exmp}

\subsection{RKHSs isometric to One-Dimensional $L^2$ spaces}

To apply the inverse transform theorem, we need to identify suitable RKHSs that
are isometric to Hilbert spaces.
Here, we focus on RKHSs isometric to $L^2$ spaces on intervals of $\R$,
as these are among the most important Hilbert spaces.

\begin{defn} \label{def:ext intval}
	The notation $(a,b)$ represents the set of real numbers strictly between
	the two extended real numbers	$a$ and $b$ in $\ol\R=\R\cup\{\pm\infty\}$, i.e.,
	\[
		(a,b)=\begin{cases}
			\{x\in\R\colon a<x<b\}, & (a<b), \\
			\es, & (a=b), \\
			\{x\in\R\colon b<x<a\}, & (b<a).
		\end{cases}
	\]
	The notations for half-open intervals $\hoi{a,b}$, closed intervals $[a,b]$, and so forth,
	are defined analogously.
\end{defn}

Under this definition, note that for $c,x,y\in\ol\R$, the following identity holds:
\[
	(c,x)\cap(c,y)=(c,\med\{x,y,c\}),
\]
where $\med\{x,y,z\}$ denotes the {\em median} of $x,y,z\in\ol\R$.

\begin{defn} \label{def:H_rho,c}
	Let $I=(a,b)$, with $-\infty\le a<b\le\infty$, be an open interval in $\ol\R$,
	and let $c\in[a,b]$.
	Consider the space of absolutely continuous complex-valued functions
	$f\in AC(I)$ on the interval $I$ such that $f(c)=0$ and $f'\in L_\rho^2(I)$,
	equipped with the inner product
	\[
	\inner{f,g}=\int_a^bf'(t)\ol{g'(t)}\,\rho(t)\,dt,
	\]
	which is called the first-order {\em Sobolev-type space} on $I$
	and is denoted by $H_{c,\rho}(I)$.
	Here, the weight function $\rho$ is assumed to be positive and measurable
	on $I$ and satisfies the following condition:
	\begin{equation}\label{eq:rho^-1}
		\frac{\chi_{(c,x)}}{\rho}\in L^1((c,x)),\quad\fa x\in I.
	\end{equation}
\end{defn}

\begin{rem}
	In the above definition, when $c$ is an endpoint of $I$, i.e.,
	when $c=a$ or $c=b$, special attention is needed regarding the value of $f(c)$.
	For any $x,y\in I$ with $x<y$, by the Cauchy-Schwarz inequality, we have
	\[
		|f(x)-f(y)|^2=\Bigl|\int_x^yf'(t)\,dt\Bigr|^2
		\le\int_x^y|f'|^2\rho\,dt\int_x^y\frac{dt}{\rho}
		\le\|f\|^2\int_x^y\frac{dt}{\rho}.
	\]
	By assumption~\eqref{eq:rho^-1},
	the right-hand side becomes arbitrarily small as $x,y\to c$.
	Thus, by Cauchy's convergence criterion, for any $f \in H_{c,\rho}(I)$,
	the limit $\dlim_{x\to c} f(x)$ exists.
	This limit value is defined as $f(c)$.
	Furthermore, from~\eqref{eq:rho^-1}, if $\rho\equiv1$, then $c$ must be in $\R$.
\end{rem}

The following theorem is a slight generalization of the RKHSs discussed in
\cite{Saitoh84a} and~\cite{Saitoh88}*{p.~75}.
We will show that the space $H_{c,\rho}(I)$ is an RKHS isometric to $L_\rho^2(I)$.

\begin{thm}\label{th:Sob(I)}
	$H_{c,\rho}(I)$ is an RKHS on $I\cup\{c\}$, and the following hold:
	\begin{enumerate}
		\item The reproducing kernel of $H_{c,\rho}(I)$ is given by
				\[
				k(x,y)=\int_{(c,\med\{x,y,c\})}\frac{dt}{\rho(t)}
				=\Bigl|\int_c^{\med\{x,y,c\}}\frac{dt}{\rho(t)}\Bigr|.
				\]
		\item Let $\chi_E$ denote the characteristic function of a set $E$.
				Then, $H_{c,\rho}(I)$ is the integral transform of the Hilbert space $L_\rho^2(I)$
				via the mapping
				\[
				\phi\colon x\in I\mapsto\frac{\chi_{(c,x)}}{\rho}\in L_\rho^2(I),
				\]
				that is, $H_{c,\rho}(I)=\hat\phi(L_\rho^2(I))$.
		\item The integral transform $\hat\phi\colon L_\rho^2(I)\to H_{c,\rho}(I)$ is
				an isometry.
				In particular, the family of functions $\phi(I)$ is complete in $L_\rho^2(I)$.
	\end{enumerate}
\end{thm}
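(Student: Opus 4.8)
The plan is to reduce everything to the integral-transform machinery of Theorem~\ref{th:int-trans}, working directly with the map $\phi$ of part~(ii); this route yields the reproducing kernel essentially for free, whereas a more hands-on approach would first establish the RKHS property of $H_{c,\rho}(I)$ by an evaluation bound and then invoke Lemma~\ref{lem:iso-met}. First I would record that $\phi(x)=\chi_{(c,x)}/\rho$ genuinely lies in $L_\rho^2(I)$: by~\eqref{eq:rho^-1}, $\|\phi(x)\|_{L_\rho^2}^2=\int_{(c,x)}\rho^{-1}\,dt<\infty$. Computing the induced transform from its definition, for $g\in L_\rho^2(I)$ one obtains $(\hat\phi g)(x)=\inner{g,\phi(x)}_{L_\rho^2}=\int_{(c,x)}g(t)\,dt$, the integral taken over the set $(c,x)$. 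It is convenient to regard $\phi$ as defined on all of $I\cup\{c\}$ by $\phi(c)=\chi_{(c,c)}/\rho=0$; this does not alter $\cspan\phi(I)$ and makes $\hat\phi g$ a function on $I\cup\{c\}$ with $(\hat\phi g)(c)=0$.

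The technical heart is to identify the operator range $\hat\phi(L_\rho^2(I))$ with $H_{c,\rho}(I)$, both as a set of functions and with the same norm. Write $\varepsilon(t)=\operatorname{sgn}(t-c)$. From the formula for $\hat\phi g$ and the absolute continuity of indefinite $L^1$-integrals, $\hat\phi g$ is absolutely continuous on $I$ with a.e.\ derivative $(\hat\phi g)'=\varepsilon\,g$, which lies in $L_\rho^2(I)$ and satisfies $\|\varepsilon g\|_{L_\rho^2}=\|g\|_{L_\rho^2}$; moreover $\hat\phi g$ is continuous at $c$ with value $0$, since $\int_{(c,x)}|g|\,dt\le\|g\|_{L_\rho^2}(\int_{(c,x)}\rho^{-1}\,dt)^{1/2}\to0$ as $x\to c$ by~\eqref{eq:rho^-1}. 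Hence $\hat\phi g\in H_{c,\rho}(I)$ with $\|\hat\phi g\|_{H_{c,\rho}}=\|g\|_{L_\rho^2}$. Conversely, given $f\in H_{c,\rho}(I)$, set $g=\varepsilon f'\in L_\rho^2(I)$; the fundamental theorem of calculus, checked separately for $x>c$ and $x<c$, gives $f(x)=\int_c^x f'(t)\,dt=\int_{(c,x)}\varepsilon f'\,dt=(\hat\phi g)(x)$ for every $x\in I\cup\{c\}$. Thus $\hat\phi\colon L_\rho^2(I)\to H_{c,\rho}(I)$ is onto, and being norm-preserving it is injective, i.e.\ $\ker\hat\phi=\phi(I)^\perp=\{0\}$.

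Everything else is then formal. Since $\ker\hat\phi=\{0\}$, the family $\phi(I)$ is complete in $L_\rho^2(I)$, and Theorem~\ref{th:int-trans}(i) shows $\hat\phi$ is an isometry onto its operator range --- this is part~(iii). As $\hat\phi$ is moreover a bijection onto $H_{c,\rho}(I)$ with $\|\hat\phi g\|_{H_{c,\rho}}=\|g\|_{L_\rho^2}$, the two Hilbert-space structures on this common set of functions coincide, so the operator range equals $H_{c,\rho}(I)$ --- this is part~(ii). Finally, Theorem~\ref{th:int-trans}(ii) identifies the reproducing kernel of this operator range, hence of $H_{c,\rho}(I)$, as $\inner{\phi(y),\phi(x)}_{L_\rho^2}=\int_{(c,x)\cap(c,y)}\rho^{-1}\,dt$; the median identity $(c,x)\cap(c,y)=(c,\med\{x,y,c\})$ together with the positivity of $\rho^{-1}$ rewrites this as $\int_{(c,\med\{x,y,c\})}\rho^{-1}\,dt=\bigl|\int_c^{\med\{x,y,c\}}\rho^{-1}\,dt\bigr|$, which is part~(i); in particular $H_{c,\rho}(I)$ is an RKHS on $I\cup\{c\}$.

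I expect the only real obstacle to be bookkeeping: carrying the orientation factor $\varepsilon(t)$ consistently through the computation of $\hat\phi g$ and its derivative, and arranging a single argument that covers both the case where $c$ is interior to $I$ and the case where $c$ is an endpoint (so that $I\cup\{c\}$ is a half-open interval and $\phi(c)=0$). The analytic input is otherwise light --- absolute continuity of indefinite integrals, the fundamental theorem of calculus for absolutely continuous functions, and a single weighted Cauchy--Schwarz bound of the type already used in the remark preceding the theorem --- and no estimate beyond~\eqref{eq:rho^-1} is needed.
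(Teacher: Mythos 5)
Your proposal is correct, and it reorganizes the argument in a way that differs meaningfully from the paper's proof. The paper proves (i) first by a direct computation: it checks that $k_y\in H_{c,\rho}(I)$ and verifies the reproducing identity $\inner{f,k_y}=f(y)$ by hand (splitting into the cases $y\le c$ and $c\le y$); it then obtains (ii) by computing $\inner{\phi(y),\phi(x)}$ and invoking the uniqueness of the RKHS attached to a given kernel, and finally proves the isometry in (iii) from $|\hat f'|=|f|$ a.e. You instead establish (ii) and (iii) first by exhibiting $\hat\phi$ as an explicit norm-preserving bijection of $L_\rho^2(I)$ onto $H_{c,\rho}(I)$ --- the surjectivity step $g=\varepsilon f'$, $\hat\phi g=f$ is the ingredient the paper never writes down, since kernel uniqueness makes it unnecessary there --- and then read off (i) from Theorem~\ref{th:int-trans}(ii). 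The shared computational core is the same in both treatments ($\inner{\phi(y),\phi(x)}=\int_{(c,\med\{x,y,c\})}\rho^{-1}\,dt$ and $(\hat\phi g)'=\varepsilon g$). What your route buys is that completeness of $H_{c,\rho}(I)$ is a conclusion rather than a tacit hypothesis: the paper's appeal to ``equal kernels implies equal RKHSs'' presupposes that $H_{c,\rho}(I)$ with its stated inner product is already a Hilbert space, a point it leaves unaddressed, whereas your isometric bijection with $L_\rho^2(I)$ delivers it for free. What the paper's route buys is a self-contained verification of the reproducing property that does not require identifying the image of $\hat\phi$ exactly. Your handling of the orientation factor $\varepsilon$, of the extension $\phi(c)=0$, and of the boundary value $f(c)$ via the weighted Cauchy--Schwarz bound from the preceding remark are all sound, so I see no gap.
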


\begin{proof}
	(i)
	Let $k_y=k(\cdot,y)$.
	From~\eqref{eq:rho^-1}, it is clear that $k_y\in H_{c,\rho}(I)$.
	We verify the reproducing property of $k_y$.
	We consider cases based on the relationship between $y$ and $c$.
	If $y\le c$,
	\begin{align}
		k(x,y)
		&=\begin{cases}
			\int_{x\vee y}^c\rho^{-1}dt, & (x\le c), \\
			0, & (c\le x).
		\end{cases}
	\end{align}
	Hence, for $f\in H_{c,\rho}(I)$,
	\[
		\inner{f,k_y}=\int_If'\ol{k_y'}\rho\,dt
		=\int_y^cf'\cdot(-\rho^{-1})\rho\,dt=\int_c^yf'(t)\,dt=f(y).
	\]
	The case $c\le y$ is handled similarly.

	(ii)
	From assumption~\eqref{eq:rho^-1}, the mapping $\phi$ is well-defined.
	The reproducing kernel of the integral transform $\hat\phi(L_\rho^2(I))$ is
	given by~\eqref{eq:phi-kernel}:
	\begin{align*}
		\inner{\phi(y),\phi(x)}_{L_\rho^2(I)}
		&=\int_I\frac{\chi_{(c,x)\cap(c,y)}}{\rho}\,dt
		=\int_{(c,\med\{x,y,c\})}\frac{dt}{\rho}.
	\end{align*}
	This coincides with the reproducing kernel of $H_{c,\rho}(I)$.
	Hence, $\hat\phi(L_\rho^2(I))=H_{c,\rho}(I)$.
	
	(iii)
	For $f\in L_\rho^2(I)$, the integral transform $\hat f=\hat\phi(f)$ is, by definition,
	\[
		\hat f(x)=\int_If\frac{\chi_{(c,x)}}{\rho}\rho\,dt
		=\begin{cases}
			-\int_c^xf\,dt, & (x\le c), \\
			\int_c^xf\,dt, & (x\ge c).
		\end{cases}
	\]
	Hence, $|\hat f'(x)|=|f(x)|$ a.e. $x\in I$, and
	\[
	\|\hat f\|_{H_{c,\rho}(I)}^2
	=\int_I|\hat f'|^2\rho\,dt
	=\int_I|f|^2\rho\,dt=\|f\|_{L_\rho^2(I)}^2.
	\]
	Therefore, the integral transform $\hat\phi$ is an isometry,
	and by Theorem~\ref{th:int-trans}, the set $\phi(I)$ is complete in $L_\rho^2(I)$.
\end{proof}

\subsection{Transformation Sequences and Tensor Products}

To handle the Fourier transform on $\R^N$ using the inversion theorem,
it is necessary to identify a manageable RKHS isometric to the $N$-dimensional $L^2$ space.
Here, we aim to construct the desired higher-dimensional RKHS
through the tensor product of the one-dimensional RKHS introduced in the previous section.

The tensor product Hilbert space $\otimes_{i=1}^n\calH_i$ of Hilbert spaces
$\calH_i$ ($i=1,\dots,n$) is the completion of the algebraic tensor product
$\odot_{i=1}^n\calH_i$, equipped with an inner product satisfying
\begin{equation}\label{eq:tensorp}
	\inner{\otimes_{i=1}^na_i,\otimes_{i=1}^nb_i}_{\otimes_{i=1}^n\calH_i}
	=\prod_{i=1}^n\inner{a_i,b_i}_{\calH_i}
\end{equation}
for any $a_i,b_i\in\calH_i$ ($i=1,\dots,n$).

Additionally, the tensor product
$\otimes_{i=1}^nS_i\colon\otimes_{i=1}^n\calH_i\to\otimes_{i=1}^n\calK_i$
of bounded operators $S_i\colon\calH_i\to\calK_i$ ($i=1,\dots,n$)
between Hilbert spaces is the unique bounded operator satisfying
\begin{equation}\label{eq:OpTensor}
	(\otimes_{i=1}^nS_i)(\otimes_{i=1}^na_i)=\otimes_{i=1}^nS_ia_i,
\end{equation}
for $a_i\in\calH_i$ ($i=1,\dots,n$) (see, e.g.,~\cite{ReedSimon72,PaulsenRaghupathi16}).

The following result regarding the tensor product of RKHSs is well known.

\begin{prp}[\cite{PaulsenRaghupathi16}*{p.~73}]\label{prp:tensorRKHS}
	Let $X$ be an RKHS on $E$ and $Y$ an RKHS on $F$.
	The tensor product Hilbert space $X\otimes Y$ is an RKHS on $E\times F$.
	If $k_x^1$ is the reproducing kernel of $X$ for $x\in E$ and $k_y^2$ is
	the reproducing kernel of $Y$ for $y\in F$,
	then the reproducing kernel of $X\otimes Y$ for $(x,y)\in E\times F$ is given by
	$k_x^1\otimes k_y^2$.
\end{prp}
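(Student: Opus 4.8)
The plan is to present $X\otimes Y$, realized as functions on $E\times F$ in the usual way, as the integral transform of itself, and then extract the statement from Theorem~\ref{th:int-trans}. Concretely, I would introduce the map
\[
	\phi\colon E\times F\to X\otimes Y,\qquad\phi(x,y)=k_x^1\otimes k_y^2,
\]
and form the induced integral transform $\hat\phi\colon X\otimes Y\to\hat\phi(X\otimes Y)\subset\C^{E\times F}$, which is $(\hat\phi u)(x,y)=\inner{u,k_x^1\otimes k_y^2}_{X\otimes Y}$. On simple tensors this is $(\hat\phi(f\otimes g))(x,y)=\inner{f,k_x^1}_X\inner{g,k_y^2}_Y=f(x)g(y)$, so $\hat\phi$ is exactly the standard realization of $X\otimes Y$ as a space of functions on $E\times F$ (it is well defined on all of $X\otimes Y$ because each evaluation $u\mapsto\inner{u,k_x^1\otimes k_y^2}$ is a bounded functional). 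Thus the proposition amounts to showing that this realization is injective and that $\hat\phi(X\otimes Y)$ is an RKHS with the asserted kernel.

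The key — and essentially the only nontrivial — step is that the family $\phi(E\times F)=\{k_x^1\otimes k_y^2\}$ is complete in $X\otimes Y$. From the reproducing property $\inner{f,k_x^1}_X=f(x)$, any $f\in X$ orthogonal to all $k_x^1$ vanishes, so $\{k_x^1\}_{x\in E}$ is total in $X$; likewise $\{k_y^2\}_{y\in F}$ is total in $Y$. Using $\|f\otimes g-f'\otimes g'\|\le\|f-f'\|_X\|g\|_Y+\|f'\|_X\|g-g'\|_Y$, one sees that $\Span\{k_x^1\otimes k_y^2\}$ is dense in the algebraic tensor product $X\odot Y$, which by construction is dense in $X\otimes Y$; hence $\Span\phi(E\times F)$ is dense in $X\otimes Y$. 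This density argument — totality of reproducing kernels in each factor, joint continuity of $\otimes$, and density of the algebraic tensor product — is where the work lies; everything after it is bookkeeping, and I expect it to be the main obstacle.

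Given completeness, Theorem~\ref{th:int-trans}(i) says $\hat\phi\colon X\otimes Y\to\hat\phi(X\otimes Y)$ is an isometric isomorphism, hence injective, so the standard realization identifies $X\otimes Y$ isometrically with the function space $\hat\phi(X\otimes Y)$. By Theorem~\ref{th:int-trans}(ii), $\hat\phi(X\otimes Y)$ is an RKHS on $E\times F$ whose reproducing kernel for a point $(x,y)$ is the function $\what{\phi(x,y)}=\what{k_x^1\otimes k_y^2}$, and evaluating at $(x',y')$ gives
\[
	\inner{k_x^1\otimes k_y^2,\,k_{x'}^1\otimes k_{y'}^2}_{X\otimes Y}
	=\inner{k_x^1,k_{x'}^1}_X\inner{k_y^2,k_{y'}^2}_Y
	=k_x^1(x')k_y^2(y');
\]
that is, the reproducing kernel for $(x,y)$ is precisely $k_x^1\otimes k_y^2$ viewed as a function, and the two-variable kernel of $X\otimes Y$ is $\inner{k_{x'}^1\otimes k_{y'}^2,k_x^1\otimes k_y^2}=k^1(x,x')k^2(y,y')$. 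As a bonus this re-derives that a product of reproducing kernels is a reproducing kernel, so no separate appeal to the Schur product theorem is needed.
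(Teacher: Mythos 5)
Your argument is correct, but note that the paper itself gives no proof of this proposition: it is quoted as a known result with a citation to Paulsen--Raghupathi, so there is no in-paper proof to match. Your route is a nice self-contained alternative that stays entirely inside the paper's own machinery: you take $\phi(x,y)=k_x^1\otimes k_y^2$, observe that evaluation $u\mapsto\inner{u,k_x^1\otimes k_y^2}$ is bounded and agrees with $f(x)g(y)$ on simple tensors, prove totality of $\{k_x^1\otimes k_y^2\}$ (totality of kernels in each factor, the cross-norm estimate, density of $X\odot Y$), and then read off from Theorem~\ref{th:int-trans} that $\hat\phi$ is an isometric, injective realization of $X\otimes Y$ as an RKHS on $E\times F$ with kernel $\what{k_x^1\otimes k_y^2}$, whose value at $(x',y')$ is $k_x^1(x')k_y^2(y')$. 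You correctly identified the only genuinely load-bearing step, the completeness of $\phi(E\times F)$, since this is exactly what makes the canonical realization faithful and turns Theorem~\ref{th:int-trans}(i) into an isometric isomorphism rather than a mere coisometry; without it the ``identification'' of the abstract tensor product with the function space would fail. The textbook treatment cited in the paper instead builds the RKHS with kernel $k^1(x,x')k^2(y,y')$ directly (product of kernels) and then identifies it with the Hilbert-space tensor product, e.g.\ via orthonormal bases; your version trades that construction for the integral-transform formalism of Section~2, which is arguably more in the spirit of the paper --- indeed it is essentially the same computation the author performs later in Proposition~\ref{prp:IT-tens}(ii), specialized to the kernel maps $x\mapsto k_x^1$, $y\mapsto k_y^2$.
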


Next, we describe the relationship between tensor product Hilbert spaces and integral transforms. In what follows, the reproducing kernel of the integral transform $\hat\phi(\calH)$
for a point $x\in E$, defined by a map $\phi\colon E\to\calH$, is denoted by $k[\phi]_x$.

\begin{prp} \label{prp:IT-tens}
	Let $\calH_i$ be Hilbert spaces, $E_i$ be sets, and $\phi_i\colon E_i\to\calH_i$
	be maps ($i=1,\dots,n$).
	Then, the integral transform defined by the map
	$\otimes_{i=1}^n\phi_i\colon\prod_{i=1}^nE_i\to\otimes_{i=1}^n\calH_i$
	such that $(\otimes_{i=1}^n\phi_i)(x_1,\dots,x_n)=\otimes_{i=1}^n\phi_i(x_i)$ satisfies:
	\begin{enumerate}
		\item For any $f_i\in\calH_i$ ($i=1,\dots,n$),
			$\what{\otimes_{i=1}^n\phi_i}(\otimes_{i=1}^nf_i)
				=\otimes_{i=1}^n\hat\phi_if_i$.
		\item For $x=(x_i)\in\prod_{i=1}^nE_i$, $k[\otimes_{i=1}^n\phi_i]_x
			=\otimes_{i=1}^nk[\phi_i]_{x_i}$.
			In particular, as RKHSs on $\prod_{i=1}^nE_i$,
			\[
				\widehat{\otimes_{i=1}^n\phi_i}\Bigl(\otimes_{i=1}^n\calH_i\Bigr)
				=\otimes_{i=1}^n\hat\phi_i(\calH_i).
			\]
		\item If $\phi_i(E_i)$ is complete in $\calH_i$ ($i=1,\dots,n$), then
			$(\otimes_{i=1}^n\phi_i)(\prod_{i=1}^nE_i)$ is also complete in $\otimes_{i=1}^n\calH_i$.
	\end{enumerate}
\end{prp}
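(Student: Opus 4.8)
The plan is to prove the three assertions in order, obtaining (ii) and (iii) as consequences of (i). The engine for (i) is a direct pointwise computation. For $f_i\in\calH_i$ and $(x_1,\dots,x_n)\in\prod_{i=1}^nE_i$, unwinding the definition of the integral transform induced by $\otimes_{i=1}^n\phi_i$ and using the defining identity \eqref{eq:tensorp} of the tensor inner product gives
\[
\bigl(\widehat{\otimes_{i=1}^n\phi_i}(\otimes_{i=1}^nf_i)\bigr)(x_1,\dots,x_n)
=\inner{\otimes_{i=1}^nf_i,\otimes_{i=1}^n\phi_i(x_i)}_{\otimes_{i=1}^n\calH_i}
=\prod_{i=1}^n\inner{f_i,\phi_i(x_i)}_{\calH_i}
=\prod_{i=1}^n(\hat\phi_if_i)(x_i).
\]
The right-hand side is exactly the value at $(x_1,\dots,x_n)$ of the elementary tensor $\otimes_{i=1}^n\hat\phi_if_i$, once $\otimes_{i=1}^n\hat\phi_i(\calH_i)$ is realized, via Proposition~\ref{prp:tensorRKHS}, as an RKHS of functions on $\prod_{i=1}^nE_i$ in which $\otimes_{i=1}^ng_i$ is the function $(x_i)_i\mapsto\prod_ig_i(x_i)$. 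Since two functions agreeing at every point are equal, (i) follows.

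For (ii), I would invoke Theorem~\ref{th:int-trans}(ii): the reproducing kernel of $\widehat{\otimes_{i=1}^n\phi_i}(\otimes_{i=1}^n\calH_i)$ for the point $x=(x_i)$ is $\widehat{(\otimes_{i=1}^n\phi_i)(x)}=\widehat{\otimes_{i=1}^n\phi_i(x_i)}$, and applying (i) with $f_i=\phi_i(x_i)$ rewrites this as $\otimes_{i=1}^n\widehat{\phi_i(x_i)}=\otimes_{i=1}^nk[\phi_i]_{x_i}$, which is the first claim. For the space identity, both $\widehat{\otimes_{i=1}^n\phi_i}(\otimes_{i=1}^n\calH_i)$ and $\otimes_{i=1}^n\hat\phi_i(\calH_i)$ are RKHSs on $\prod_{i=1}^nE_i$, and by Theorem~\ref{th:int-trans}(ii) together with Proposition~\ref{prp:tensorRKHS} their two-variable reproducing kernels at $(x,y)$ both equal $\prod_{i=1}^n\inner{\phi_i(y_i),\phi_i(x_i)}_{\calH_i}$; hence the two spaces coincide by the uniqueness of an RKHS with a prescribed reproducing kernel (\cite{Aronszajn50}).

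For (iii), recall from Theorem~\ref{th:int-trans}(i) that $\phi_i(E_i)$ is complete in $\calH_i$ precisely when $\hat\phi_i$ is an isometry. I would then check that $\otimes_{i=1}^n\hat\phi_i$ is an isometry by the one-line computation on elementary tensors $\inner{\otimes_i\hat\phi_ia_i,\otimes_i\hat\phi_ib_i}=\prod_i\inner{\hat\phi_ia_i,\hat\phi_ib_i}=\prod_i\inner{a_i,b_i}=\inner{\otimes_ia_i,\otimes_ib_i}$, extended by sesquilinearity to $\odot_{i=1}^n\calH_i$ and by continuity to the completion. By (i), $\widehat{\otimes_{i=1}^n\phi_i}$ and $\otimes_{i=1}^n\hat\phi_i$ agree on elementary tensors, and both are bounded (the former is a contraction by Theorem~\ref{th:int-trans}(i)), so they agree on the dense subspace $\odot_{i=1}^n\calH_i$ and hence on all of $\otimes_{i=1}^n\calH_i$. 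Thus $\widehat{\otimes_{i=1}^n\phi_i}$ is an isometry, and Theorem~\ref{th:int-trans}(i) yields that $(\otimes_{i=1}^n\phi_i)(\prod_{i=1}^nE_i)$ is complete in $\otimes_{i=1}^n\calH_i$. (A purely direct alternative: every elementary tensor $\otimes_ia_i$ is a limit of tensors $\otimes_i(\sum_kc^i_k\phi_i(x^i_k))$ by joint continuity of $(a_1,\dots,a_n)\mapsto\otimes_ia_i$, so $\cspan\{\otimes_i\phi_i(x_i)\}$ contains $\odot_{i=1}^n\calH_i$ and is therefore all of $\otimes_{i=1}^n\calH_i$.)

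The individual computations are routine multilinear algebra, so the step I expect to be the main obstacle is bookkeeping rather than depth: keeping the two incarnations of the tensor product straight — the completed tensor product of the abstract Hilbert spaces $\hat\phi_i(\calH_i)$ versus the concrete RKHS of functions on $\prod_{i=1}^nE_i$ furnished by Proposition~\ref{prp:tensorRKHS} — and making clean the passage from elementary tensors, where every identity is immediate, to the whole completed space via boundedness and density. Pinning down the identification used in (i) is precisely what makes (ii) and (iii) essentially automatic.
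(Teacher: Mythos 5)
Your proposal is correct and follows essentially the same route as the paper: (i) by the direct pointwise computation from \eqref{eq:tensorp}, (ii) by identifying the reproducing kernel with $\otimes_{i=1}^nk[\phi_i]_{x_i}$ and invoking Proposition~\ref{prp:tensorRKHS} together with the uniqueness of the RKHS attached to a given kernel, and (iii) by density of the span of the elementary tensors $\otimes_i\phi_i(x_i)$. Your primary argument for (iii), routed through the isometry characterization in Theorem~\ref{th:int-trans}(i) (and implicitly re-deriving Proposition~\ref{prp:EHW}), is a harmless variant, and your parenthetical direct alternative is exactly the paper's argument.
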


\begin{proof}
	First, note that both the integral transform
	$\what{\otimes_{i=1}^n\phi_i}(\otimes_{i=1}^n\calH_i)$ and the tensor product Hilbert space
	$\otimes_{i=1}^n\hat\phi_i\calH_i$ are RKHSs on $\prod_{i=1}^nE_i$.
	For $x_i\in E_i$ and $f_i\in\calH_i$ ($i=1,\dots,n$),
	(i) follows from a straightforward calculation based on the definition.
	To prove (ii), using~\eqref{eq:phi-kernel},
	the reproducing kernel of $\otimes_{i=1}^n\calH_i$ for $y=(y_i)$ is
	\begin{align*}
		k[\otimes_{i=1}^n\phi_i]_x(y)
		&=\inner{\otimes_{i=1}^n\phi_i(y),\otimes_{i=1}^n\phi_i(x)}_{\otimes_{i=1}^n\calH_i}
			=\prod_{i=1}^n\inner{\phi_i(y_i),\phi_i(x_i)}_{\calH_i} \\
		&=\prod_{i=1}^n k[\phi_i]_{x_i}(y_i)=(\otimes_{i=1}^n k[\phi_i]_{x_i})(y).
	\end{align*}
	Hence, $k[\otimes_{i=1}^n\phi_i]_x=\otimes_{i=1}^nk[\phi_i]_{x_i}$.
	By Proposition~\ref{prp:tensorRKHS}, the left-hand side and right-hand side represent
	the reproducing kernels of the RKHS $\widehat{\otimes_{i=1}^n\phi_i}(\otimes_{i=1}^n \calH_i)$
	and the RKHS $\otimes_{i=1}^n\hat\phi_i(\calH_i)$ for $x$, respectively.
	The equality of reproducing kernels implies that these RKHSs are identical.
	(iii) follows from the fact that the span of
	$(\otimes_{i=1}^n\phi_i)(\prod_{i=1}^nE_i)=\otimes_{i=1}^n(\phi_i(E_i))$ is dense
	in the algebraic tensor product $\odot_{i=1}^n\calH_i$,
	which is straightforward to verify.
\end{proof}

Transformation sequences are closed under tensor products.

\begin{prp}\label{prp:EHW}
	If $E\oset{\phi_i}\to\calH_i\oset{S_i}\to\calW_i$ are transformation sequences on $T_i$ ($i=1,\dots,N$), then
	\[
		\begin{tikzcd}
			\prod_{i=1}^NE_i\ar[r,"\otimes_{i=1}^N\phi_i"] 
			& \otimes_{i=1}^N\calH_i\ar[r,"\otimes_{i=1}^NS_i"]
			& \otimes_{i=1}^N\calW_i
		\end{tikzcd}
	\]
	is a transformation sequence on $\prod_{i=1}^NT_i$.
\end{prp}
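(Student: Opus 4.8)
The plan is to unwind the definition of a transformation sequence and verify the three required ingredients: that $\otimes_{i=1}^N\calW_i$ is an RKHS on $\prod_{i=1}^NT_i$, that $\otimes_{i=1}^NS_i$ is a well-defined isometry from $\otimes_{i=1}^N\calH_i$ into $\otimes_{i=1}^N\calW_i$, and that the composite arrow is indeed $\what{\otimes_i\phi_i}$ followed by $\otimes_i S_i$ acting as described in the definition. The first point is immediate from Proposition~\ref{prp:tensorRKHS} by induction on $N$: a finite tensor product of RKHSs is an RKHS on the product set, with reproducing kernel the tensor product of the individual kernels. So that step is essentially a citation.

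Next I would check that $\otimes_{i=1}^NS_i$ is an isometry. Each $S_i\colon\calH_i\to\calW_i$ is an isometry by hypothesis, hence $S_i^*S_i=\mathrm{id}_{\calH_i}$. Using the defining property~\eqref{eq:OpTensor} of the operator tensor product together with the multiplicativity~\eqref{eq:tensorp} of the inner product on elementary tensors, one computes for $a=\otimes_i a_i$ and $b=\otimes_i b_i$ that
\[
	\inner{(\otimes_i S_i)a,(\otimes_i S_i)b}_{\otimes_i\calW_i}
	=\prod_{i=1}^N\inner{S_ia_i,S_ib_i}_{\calW_i}
	=\prod_{i=1}^N\inner{a_i,b_i}_{\calH_i}
	=\inner{a,b}_{\otimes_i\calH_i}.
\]
Since elementary tensors span a dense subspace of the algebraic tensor product, sesquilinearity extends this to all of $\odot_{i=1}^N\calH_i$, and continuity of $\otimes_iS_i$ (it is bounded by~\eqref{eq:OpTensor}) extends it to the completion $\otimes_{i=1}^N\calH_i$. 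Thus $\otimes_iS_i$ preserves norms, i.e. it is an isometry. (Alternatively one may just invoke the standard fact that the tensor product of isometries is an isometry, but spelling it out as above keeps the paper self-contained.)

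Finally I would identify the composite $(\otimes_iS_i)\circ(\otimes_i\phi_i)$ with the map $S\phi$ appearing in the definition of a transformation sequence for the data $\phi=\otimes_i\phi_i$, $S=\otimes_iS_i$: for $x=(x_i)\in\prod_iE_i$,
\[
	\bigl((\otimes_iS_i)\circ(\otimes_i\phi_i)\bigr)(x)
	=(\otimes_iS_i)\bigl(\otimes_i\phi_i(x_i)\bigr)
	=\otimes_i S_i\phi_i(x_i)
	=\otimes_i(S_i\phi_i)(x_i),
\]
again by~\eqref{eq:OpTensor}. Combining the three observations, $\prod_iE_i\xrightarrow{\otimes_i\phi_i}\otimes_i\calH_i\xrightarrow{\otimes_iS_i}\otimes_i\calW_i$ has exactly the form required by the definition, so it is a transformation sequence on $\prod_iT_i$. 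I do not expect any genuine obstacle here; the only mild subtlety is being careful that the extension of the isometry identity from elementary tensors to the full completion uses both density in the algebraic tensor product and continuity of the operator, which I have noted above.
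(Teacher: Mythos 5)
Your proposal is correct and follows essentially the same route as the paper: the paper likewise reduces the statement to showing that $\otimes_{i=1}^NS_i$ is an isometry, verified on simple tensors via~\eqref{eq:tensorp} and~\eqref{eq:OpTensor}, with the RKHS structure on $\prod_{i=1}^NT_i$ coming from Proposition~\ref{prp:tensorRKHS}. You merely spell out the density/continuity extension and the identification of the composite map, which the paper leaves implicit.
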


\begin{proof}
	It suffices to show that the operator $\otimes_{i=1}^NS_i$ is an isometry. This can be demonstrated by showing isometry for simple tensors, which follows directly
	from~\eqref{eq:tensorp} and the definition~\eqref{eq:OpTensor}.
\end{proof}

\section{Application}

\subsection{Paley-Wiener Space}\label{sec:PW}

As preparation for treating Fourier transforms using the theory of reproducing kernels,
we consider the Paley-Wiener space.
For $a>0$, let $I_a=(-a,a)$.
We define a mapping $\phi\colon\C\to L^2(I_a)$ by
\begin{equation}\label{eq:e-itx}
    \phi(t,x)=\phi(x)(t)=\frac1{\sqrt{2\pi}}e^{it\ol x},\quad(t,x)\in I_a\times\C.
\end{equation}
The set $\phi(\C)$ is complete in $L^2(I_a)$, because the family $\{e^{i\pi nt/a}\}_{n\in\Z}$
is a complete orthogonal system in $I_a$.
Therefore, by Theorem~\ref{th:int-trans}, the integral transform $\hat\phi\colon L^2(I_a)\to\hat\phi(L^2(I_a))$ is an isometry.
The integral transform induced by $\phi$ is related to the Fourier transform $\calF$
on $L^2(\R)$ as follows: for $f\in L^2(\R)$,
\[
\hat\phi(f|_{I_a})=\frac1{\sqrt{2\pi}}\int_{I_a}f(t)e^{-it\cdot}\,dt
=\calF(f\chi_{I_a}).
\]
The image $\hat\phi(L^2(I_a))$ of the integral transform, denoted by $PW(a)$,
is an RKHS on $\C$, called the \emph{Paley-Wiener space of order $a$}.
The reproducing kernel $K_a(x,y)$ of $PW(a)$ is given by, for $(x,y)\in\C^2$,
\begin{align}\label{eq:DirichKern}
	K_a(x,y)
	&=\inner{\phi(y),\phi(x)}_{L^2(I_a)}
		=\frac1{2\pi}\int_{I_a}e^{it\ol y}\ol{e^{it\ol x}}\,dt
		=\frac{\sin(a(x-\ol y))}{\pi(x-\ol y)}.
\end{align}
Since the integral kernel $\ol{\phi(x)}$ of $PW(a)$ is an entire function of $x$,
it is easy to see that differentiation and integration can be interchanged.
This implies that elements of $PW(a)$ are entire functions.
The uniqueness theorem for analytic functions implies that
if an entire function vanishes on $\R$, then it is identically zero on $\C$.
Thus, the real line $\R$ is a uniqueness set for $PW(a)$.
The following lemma provides the equality~\eqref{eq:k(x,y)iso} for the norm in $PW(a)$,
which is essential for our purposes.

\begin{lem}\label{lem:intsinsin}
	For all $x,y\in\R$,
	\[
	\int_\R\frac{\sin(a(t-x))\sin(a(t-y))}{(t-x)(t-y)}\,dt
	=\pi\frac{\sin(a(y-x))}{y-x}.
	\]
\end{lem}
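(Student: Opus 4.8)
The plan is to recognize the left-hand side as an inner product in $PW(a)$ and invoke the reproducing property. Recall from~\eqref{eq:DirichKern} that the reproducing kernel of $PW(a)$ is $K_a(x,y)=\frac{\sin(a(x-\ol y))}{\pi(x-\ol y)}$, so for $x,y\in\R$ the function $t\mapsto\frac{\sin(a(t-x))}{\pi(t-x)}$ is precisely $K_a(\cdot,x)=:K_x$, the reproducing kernel for the point $x$. Hence
\[
	\int_\R\frac{\sin(a(t-x))\sin(a(t-y))}{(t-x)(t-y)}\,dt
	=\pi^2\int_\R K_x(t)\ol{K_y(t)}\,dt,
\]
where the complex conjugate is harmless since both kernels are real-valued on $\R$.

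The key step is therefore to show that the $PW(a)$-inner product $\inner{K_x,K_y}_{PW(a)}$ coincides with the $L^2(\R)$-inner product $\int_\R K_x(t)\ol{K_y(t)}\,dt$. This is exactly the situation of Lemma~\ref{lem:iso-met} with $H=PW(a)$, $E=\C$, $F=\R$, and $K=L^2(\R)$: the real line $\R$ is a uniqueness set for $PW(a)$ (established in the text preceding the lemma via the identity theorem for entire functions), so condition~(i) holds; and condition~(ii), the identity $K_a(x,y)=\inner{K_y|_\R,K_x|_\R}_{L^2(\R)}$ for $x,y\in\R$, is what we must verify — but this is a direct computation, since $\int_\R\frac{\sin(a(t-x))\sin(a(t-y))}{(t-x)(t-y)}\,dt$ can be evaluated by standard means (for instance by Parseval applied to the Fourier transforms of $t\mapsto\frac{\sin(a(t-x))}{\pi(t-x)}$, which are $\frac1{\sqrt{2\pi}}e^{-i\xi x}\chi_{I_a}(\xi)$, giving $\frac1{2\pi}\int_{I_a}e^{-i\xi x}\ol{e^{-i\xi y}}\,d\xi=\frac1\pi K_a(x,y)$). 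Once Lemma~\ref{lem:iso-met} applies, its conclusion — that the extended isometry is the restriction map — tells us that $\|f\|_{PW(a)}=\|f|_\R\|_{L^2(\R)}$ for all $f\in PW(a)$, and by polarization $\inner{K_x,K_y}_{PW(a)}=\inner{K_x|_\R,K_y|_\R}_{L^2(\R)}$. Combining with the reproducing property $\inner{K_x,K_y}_{PW(a)}=K_y(x)=\frac{\sin(a(y-x))}{\pi(y-x)}$ yields the claimed formula after multiplying through by $\pi^2$.

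Alternatively, and perhaps more transparently, one can bypass Lemma~\ref{lem:iso-met} and argue directly: the integral transform $\hat\phi\colon L^2(I_a)\to PW(a)$ is an isometry (noted in Section~\ref{sec:PW}), and $K_x=\hat\phi(\ol{\phi(x)})=\hat\phi\bigl(\frac1{\sqrt{2\pi}}e^{-i\xi x}\bigr)$ since $\ol{\phi(x,\cdot)}=\what{\phi(x)}$ is the reproducing kernel by Theorem~\ref{th:int-trans}. Then $\inner{K_x,K_y}_{PW(a)}=\inner{\ol{\phi(x)},\ol{\phi(y)}}_{L^2(I_a)}=\frac1{2\pi}\int_{I_a}e^{-i\xi x}e^{i\xi y}\,d\xi=\frac1\pi K_a(x,y)$, while separately $K_x(t)=\frac1{\sqrt{2\pi}}\int_{I_a}e^{-i\xi x}\ol{\phi(\xi)(t)}$... — more directly, just compute both sides. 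I expect the only genuine obstacle to be the bookkeeping in the direct Parseval computation of $\int_\R\frac{\sin(a(t-x))\sin(a(t-y))}{(t-x)(t-y)}\,dt$, namely correctly tracking the Fourier transform of the sinc-type kernel and the conjugation; everything else is an immediate citation of the reproducing property and the isometry of $\hat\phi$.
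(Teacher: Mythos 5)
There is a genuine problem of circularity here, on two levels. First, within your own argument: Lemma~\ref{lem:iso-met} cannot do any work for you, because its hypothesis~\eqref{eq:k(x,y)iso} (with $T$ the restriction to $\R$, $H=PW(a)$, $K=L^2(\R)$) \emph{is} the identity of the present lemma, up to the factor $\pi^2$; and the identification $\inner{K_x,K_y}_{PW(a)}=\inner{K_x|_\R,K_y|_\R}_{L^2(\R)}$, i.e.\ \eqref{eq:PWnorm}, is precisely the \emph{consequence} that the paper extracts from this lemma, so it is not available beforehand. Your ``more transparent'' alternative runs into exactly this: $\inner{K_x,K_y}_{PW(a)}=K_a(x,y)$ is immediate from the reproducing property (or from the isometry of $\hat\phi$), but it says nothing about $\int_\R K_x(t)\ol{K_y(t)}\,dt$ until one already knows that the $PW(a)$ inner product is computed by the $L^2(\R)$ integral --- which is what this lemma is needed to establish. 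Stripped of that packaging, the only substantive step in your proposal is the Parseval evaluation of the sinc integral, which you defer as ``standard means.'' Second, within the paper: since $t\mapsto\frac{\sin(a(t-x))}{\pi(t-x)}$ lies in $L^2(\R)$ but not in $L^1(\R)$, the ``Parseval'' you invoke is Plancherel's theorem on $L^2(\R)$ --- exactly the theorem that Section~4 uses this lemma to prove. As a standalone verification of the integral identity your route is fine (Plancherel is true), but as a step in this paper it defeats the purpose and makes the application circular.

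There is also a constant slip in the one computation you do carry out: $\frac1{2\pi}\int_{I_a}e^{-i\xi x}e^{i\xi y}\,d\xi=\frac{\sin(a(y-x))}{\pi(y-x)}=K_a(x,y)$, not $\frac1\pi K_a(x,y)$; with your stated constant the bookkeeping delivers $\frac{\sin(a(y-x))}{y-x}$ rather than the asserted $\pi\frac{\sin(a(y-x))}{y-x}$. The paper's proof avoids all Fourier $L^2$ theory: it uses the partial fraction decomposition $\frac1{(t-x)(t-y)}=\frac1{y-x}\bigl(\frac1{t-y}-\frac1{t-x}\bigr)$, a shift and product-to-sum reduction in which the odd term drops out of the symmetric improper integral, and the Dirichlet integral $\int_0^\infty\frac{\sin t}{t}\,dt=\frac\pi2$. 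If you want to salvage your frequency-side idea without circularity, you would have to justify the passage to the frequency side by hand (truncate to $[-n,n]$, apply Fubini against $\chi_{I_a}$, and let $n\to\infty$), at which point you are essentially redoing the paper's Dirichlet-integral computation.
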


\begin{proof}
    Using partial fraction decomposition, we have
    \begin{align}
        \int_\R\frac{\sin(a(t-x))\sin(a(t-y))}{(t-x)(t-y)}\,dt
        &=\frac1{y-x}\Bigl\{\int_{-\infty}^\infty\frac{\sin(a(t-x))\sin(a(t-y))}{t-y}\,dt \\
        &\qquad\qquad-\int_{-\infty}^\infty\frac{\sin(a(t-x))\sin(a(t-y))}{t-x}\,dt\Bigr\} \\
        &=\frac{F(x,y)-F(y,x)}{y-x}. \label{eq:F-F}
    \end{align}
    Here, the two integrals on the right-hand side are improper integrals,
		and the first one is denoted by $F(x,y)$.
    To compute $F(x,y)$, we make the substitution $u=t-y$:
    \begin{align}
        F(x,y)
        &=\int_{-\infty}^\infty\frac{\sin(a(t-x))\sin(a(t-y))}{t-y}\,dt \\
        &=\int_{-\infty}^\infty\frac{\sin(a(t+y-x))\sin(at)}{t}\,dt \\
        &=\lim_\ninf\int_{-n}^n\frac{\cos(at)\sin(a(y-x))\sin(at)}{t}\,dt \\
        &=\frac{\sin(a(y-x))}2\int_{-\infty}^\infty\frac{\sin(2at)}t\,dt
            =\sin(a(y-x))\int_0^\infty\frac{\sin t}t\,dt \\
        &=\frac\pi2\sin(a(y-x)).
    \end{align}
    Substituting this into~\eqref{eq:F-F}, we obtain the desired result.
\end{proof}

In Lemma~\ref{lem:iso-met} let $T$ be the restriction map to $\R$, and set $H=PW(a)$, $E=\C$, $F=\R$, and $K=L^2(\R)$.
Lemma~\ref{lem:intsinsin} implies that the norm of $PW(a)$ is given by the $L^2(\R)$ norm
(cf.~\cite{Saitoh97}*{p.~62}): for $f\in PW(a)$,
\begin{equation}\label{eq:PWnorm}
  \|f\|_{PW(a)}^2=\int_\R|f(x)|^2\,dx.
\end{equation}

\subsection{Plancherel's Theorem}

In this section, we provide an alternative proof of Plancherel's theorem using integral transforms and the inversion theorem.
This approach offers a different perspective compared to standard proofs that
often rely on approximation by smooth functions.
We use the notation from the previous section.
For $t=(t_i), x=(x_i)\in\C^N$, let $t\cdot x=\sum_{i=1}^Nt_i\ol{x}_i$.

\begin{thm}[Plancherel's Theorem]
	For $f\in L^1(\R^N)$ ($N\in\N$), the Fourier transform $\calF f$
	and the conjugate Fourier transform $\calF^* f$ are defined for $t\in\R^N$ by
	\begin{align}
		\calF f(t)
		&= \frac1{(2\pi)^{N/2}}\int_{\R^N}f(x)e^{-it\cdot x}\,dx, \\
		\calF^* f(t)
		&= \frac1{(2\pi)^{N/2}}\int_{\R^N}f(x)e^{it\cdot x}\,dx.
	\end{align}
	Then, the restrictions of $\calF$ and its conjugate $\calF^*$ to
	$L^1(\R^N)\cap L^2(\R^N)$ extend to unitary operators on $L^2(\R^N)$,
	and they are inverses of each other.
\end{thm}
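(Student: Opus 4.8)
The plan is to deduce Plancherel's theorem from the machinery built in the preceding sections, using the Paley-Wiener spaces $PW(a)$ as the exhausting family of RKHSs. First I would set up the one-dimensional picture: fix $a>0$ and consider the transformation sequence $I_a \overset{\phi}{\to} L^2(I_a) \overset{S}{\to} PW(a)$, where $\phi$ is the map from~\eqref{eq:e-itx} and $S = \hat\phi$ is the integral transform, which is an isometry onto $PW(a)$ by Theorem~\ref{th:int-trans} since $\phi(\C)$ is complete in $L^2(I_a)$. For the $N$-dimensional case I would invoke Proposition~\ref{prp:EHW} to tensor $N$ copies of this sequence, obtaining a transformation sequence $\prod_{i=1}^N I_a \to \otimes^N L^2(I_a) \to \otimes^N PW(a)$; since $\otimes^N L^2(I_a) = L^2(I_a^N)$ and $\otimes^N PW(a)$ is an RKHS on $\C^N$ whose reproducing kernel is the product of the Dirichlet kernels~\eqref{eq:DirichKern} (by Proposition~\ref{prp:tensorRKHS} and Proposition~\ref{prp:IT-tens}), this realizes $\calF(\chi_{I_a^N} g)$, for $g \in L^2(\R^N)$, as an isometry from $L^2(I_a^N)$ onto $PW_N(a) := \otimes^N PW(a)$.

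Next I would identify the norm on $PW_N(a)$ with the $L^2(\R^N)$ norm. For $N=1$ this is exactly~\eqref{eq:PWnorm}, established via Lemma~\ref{lem:iso-met} and Lemma~\ref{lem:intsinsin}. For general $N$ one tensors this identification: $\R^N$ is a uniqueness set for $PW_N(a)$ (a product of entire functions vanishing on $\R^N$ vanishes identically), and the reproducing kernel of $PW_N(a)$ restricted to $\R^N\times\R^N$ factors as a product of one-dimensional sinc kernels, each of which integrates against the other in $L^2(\R)$ by Lemma~\ref{lem:intsinsin}; applying Lemma~\ref{lem:iso-met} with $K = L^2(\R^N)$ and $T$ the restriction map gives $\|f\|_{PW_N(a)}^2 = \int_{\R^N}|f(x)|^2\,dx$. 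Consequently, $\calF$ maps $L^2(I_a^N)$ isometrically into $L^2(\R^N)$ for every $a$. Letting $a\to\infty$, the spaces $L^2(I_a^N)$ exhaust $L^2(\R^N)$, so $\calF$ (initially defined on $L^1\cap L^2$ by the integral, and agreeing with the above on each $L^2(I_a^N)\subset L^1\cap L^2$) is an isometry on a dense subspace and hence extends to an isometry $\calF\colon L^2(\R^N)\to L^2(\R^N)$; the same argument applies verbatim to $\calF^*$, whose kernel $e^{it\cdot x}$ is the complex conjugate of that of $\calF$.

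It remains to show $\calF$ and $\calF^*$ are mutually inverse, which by the isometry property is equivalent to showing either composition is the identity, or that $\calF$ is surjective. Here I would use the inverse transform formula. Applying Theorem~\ref{thm:invtrans2} to the transformation sequence above — noting that $PW_N(a)$ is an RKHS and $\phi$ has complete range, so the hypotheses are met — yields, for $g \in L^2(I_a^N)$ and $t \in I_a^N$,
\[
	g(t) = (Sg)(t)\big|_{\text{as element of }L^2(I_a^N)} = \inner{\what{S\phi}(Sg), \ol{(S\phi)(t,\cdot)}}_{PW_N(a)},
\]
and since $(S\phi)(t,x) = \frac1{(2\pi)^{N/2}} e^{it\cdot x}$ so that $\ol{(S\phi)(t,\cdot)}$ reproduces against the $L^2(\R^N)$ inner product on $PW_N(a)$, the right side is $\frac1{(2\pi)^{N/2}}\int_{\R^N}(\calF g)(x) e^{it\cdot x}\,dx = \calF^*(\calF g)(t)$. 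Thus $\calF^*\calF g = g$ on $I_a^N$ for all $a$, hence on $\R^N$, for all $g$ in the dense set $\bigcup_a L^2(I_a^N)$, and therefore $\calF^*\calF = \mathrm{id}$ on $L^2(\R^N)$ by continuity; symmetry gives $\calF\calF^* = \mathrm{id}$. An isometry with a two-sided inverse is unitary, completing the proof.

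I expect the main obstacle to be the bookkeeping around the tensor product identification in the second paragraph — specifically, verifying cleanly that the restriction-of-functions isometry $\tilde T$ from Lemma~\ref{lem:iso-met} is compatible with the tensor structure, i.e. that restricting to $\R^N$ intertwines $\otimes^N PW(a)$ with $L^2(\R^N) = \otimes^N L^2(\R)$, rather than some other isometric copy. The subtlety is that Lemma~\ref{lem:iso-met} only asserts an isometry onto \emph{a closed subspace} of $L^2(\R^N)$, so one must separately argue that the image is all of $L^2(\R^N)$ — but this is precisely what the surjectivity of $\calF$ (equivalently, the inverse formula in the third paragraph) delivers, so the two halves of the argument are mutually supporting and need to be assembled in the right order.
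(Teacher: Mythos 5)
Your first two paragraphs (the isometry of $\calF$ and $\calF^*$ via the tensor power of the Paley--Wiener identification $\|f\|_{PW(a)^{\otimes N}}^2=\int_{\R^N}|f|^2\,dx$) follow the paper's argument and are fine. The gap is in the third paragraph. With your choice of transformation sequence, where $S=\hat\phi\colon L^2(I_a^N)\to PW_N(a)$, the conclusion of Theorem~\ref{thm:invtrans2} is $(Sg)(t)=\inner{\hat g,\ol{(S\phi)(t,\cdot)}}_{\hat\calH}$, and here $(Sg)(t)=\calF(g\chi_{I_a^N})(t)$, \emph{not} $g(t)$: the left-hand side of \eqref{eq:invtrans2} is always the image of $g$ under the isometry $S$, so your identity collapses to the reproducing property of $PW_N(a)$ applied to $\calF(g\chi_{I_a^N})$ and says nothing about recovering $g$. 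Relatedly, your identification $(S\phi)(t,x)=(2\pi)^{-N/2}e^{it\cdot x}$ is the formula for $\phi^{\otimes N}(t,x)$ itself, i.e.\ it treats $S$ as pointwise evaluation on $L^2(I_a^N)$; with the correct $S=\hat\phi$ one gets $(S\phi)(t,x)=K_a^{\otimes N}(t,x)$ (a product of sinc kernels), and with your claimed $S$ the function $\ol{(S\phi)(t,\cdot)}$ has constant modulus and does not even belong to $PW_N(a)\subset L^2(\R^N)$, so the pairing is undefined. The obstruction is structural: the (i)$\Rightarrow$(ii) direction of Theorem~\ref{thm:invtrans} (Schwarz's inequality) shows that any pointwise formula $g(t)=\inner{\hat g,\ol{\psi(t,\cdot)}}_{\hat\calH}$ valid for all $g$ in the closed span (which is all of $L^2(I_a^N)$, by completeness of $\phi^{\otimes N}$) would force $L^2(I_a^N)$ to be an RKHS, which it is not; no choice of isometry $S$ can change this, because \eqref{eq:invtrans2} only ever returns $(Sg)(t)$.

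This is exactly what the paper's proof is designed to get around: it takes $S$ to be the indefinite integral operator into the Sobolev-type RKHS $\calW(I_a)=H_{0,1}(I_a)$ of Theorem~\ref{th:Sob(I)}, so that after tensoring, \eqref{eq:invtrans2} yields $\int_{I(t)}g(s)\,ds=(2\pi)^{-N/2}\int_{\R^N}\calF(g\chi_{I_a^N})(x)\int_{I(t)}e^{is\cdot x}\,ds\,dx$, where $\int_{I(t)}e^{is\cdot x}\,ds$ does lie in $L^2(\R^N)$ as a function of $x$. One then lets $a\to\infty$ using the isometry of $\calF$, applies Fubini when $\calF g\in L^1(\R^N)$ to identify the right-hand side with $\int_{I(t)}\calF^*\calF g(s)\,ds$, differentiates in each variable to get $g=\calF^*\calF g$ a.e., and concludes by a density argument (for $g\in C_c^\infty$ one has $\calF g\in L^1$) together with boundedness of $\calF,\calF^*$. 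Your sketch also omits the step identifying the $L^2(\R^N)$-pairing with the pointwise integral defining $\calF^*$, which is precisely where the hypothesis $\calF g\in L^1$ and the Fubini-plus-density argument are needed. So to repair the proof you need both the detour through an RKHS surrogate for pointwise values (the indefinite integral) and that final integrability/density step.
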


\begin{proof}
	The integral transform $\hat\phi\colon L^2(I_a)\to PW(a)$ is a Fourier transform.
	Since $\hat\phi$ is an isometry, its extension
	$\hat\phi\colon L^2(I_a)\to L^2(\R)$ is also an isometry.
	As $L^2(I_a)^{\otimes N} \cong L^2(I_a^N)$,
	the operator $\hat\phi^{\otimes N}: L^2(I_a)^{\otimes N} \to PW(a)^{\otimes N}$
	can be identified with the Fourier transform on $L^2(I_a^N)$.
	By Proposition~\ref{prp:EHW}, the tensor product of isometries is an isometry.
	Therefore, $\hat\phi^{\otimes N}$ is an isometry.
	The Fourier transform on $L^2(\R^N)$ is defined as the mean convergence
	of the Fourier transform on the Cartesian products $I_a^N$ as $a \to \infty$.
	From the above, it follows that the Fourier transform $\calF: L^2(\R^N) \to L^2(\R^N)$
	is an isometry.
	Similarly, the conjugate Fourier transform $\calF^*: L^2(\R^N) \to L^2(\R^N)$
	is also an isometry.

	Next, we show that $\calF$ and $\calF^*$ are mutual inverses, i.e.,
	$\calF\calF^*=\calF^*\calF=I$, where $I$ denotes the identity.
	Let $\calW(I_a)=H_{0,1}(I_a)$.
	Then $\calW(I_a)$ is an RKHS on $I_a$, and the indefinite integral operator
	$S\colon f\in L^2(I_a)\mapsto\int_0^\blt f(x)\, dx\in\calW(I_a)$ is an isometry.
	Considering the transformation sequence on $I_a$,
	\[
	\begin{tikzcd}
		I_a \ar[r, "\phi"] & L^2(I_a) \ar[r, "S"] & \calW(I_a)
	\end{tikzcd}
	\]
	and taking the $N$-fold tensor product and applying Proposition~\ref{prp:EHW},
	we obtain the following transformation sequence on $I_a^N$:
	\[
	\begin{tikzcd}
		I_a^N \ar[r, "\phi^{\otimes N}"] & L^2(I_a)^{\otimes N} \ar[r, "S^{\otimes N}"]
		& \calW(I_a)^{\otimes N}
	\end{tikzcd}.
	\]
	Since the tensor product Hilbert space $L^2(I_a)^{\otimes N}$
	is isomorphic to $L^2(I_a^N)$,
	we have, for $t=(t_j),\ x=(x_j)\in\R^N$,
	\[
		\phi^{\otimes N}(t,x)=\frac1{(2\pi)^{N/2}}e^{it\cdot x}.
	\]

	On the other hand, by the definition of the tensor product of maps and Fubini's theorem,
	for $I(t)=\prod_{j=1}^N (0, t_j)\ (\subset I_a^N)$ and $f_1,\dots,f_N\in L^2(I_a)$,
	we have
	\[
		S^{\otimes N}(\otimes_{j=1}^N f_j)(t)
		= \prod_{j=1}^N Sf_j(t_j)
		= \prod_{j=1}^N \int_0^{t_j} f_j(s_j)\,ds_j
		= \int_{I(t)} (\otimes_{j=1}^N f_j)(s)\,ds.
	\]
	Since the span of simple tensors $\otimes_{j=1}^N f_j$ is dense in $L^2(I_a^N)$,
	it follows that for all $f\in L^2(I_a^N)$,
	\[
		(S^{\otimes N}f)(t) = \int_{I(t)}f(s)\,ds,
	\]
	i.e., $S^{\otimes N}$ is the $N$-dimensional indefinite integral operator.
	Similarly, the norm of $PW(a)^{\otimes N}$ which is an RKHS on $\C^N$ satisfies
	\[
		\|f\|_{PW(a)^{\otimes N}}^2 = \int_{\R^N}|f(x)|^2dx
	\]
	for $f\in PW(a)^{\otimes N}$.
	Therefore, when $f\in L^2(\R^N)$ and $f|_{I_a^N}\in L^2(I_a^N)$,
	applying the inverse transformation formula~\eqref{eq:invtrans2},
	we obtain, for $t=(t_j)\in I_a^N$, $x=(x_j)\in\R^N$,
	\begin{equation} \label{eq:FourRN}
		\int_{I(t)} f(s)\,ds
		=\frac{1}{(2\pi)^{N/2}}\int_{\R^N}\calF(f\chi_{I_a^N})(x)\int_{I(t)}e^{is\cdot x}\,ds\,dx.
	\end{equation}
	By Theorem~\ref{thm:invtrans}, for fixed $t\in I_a^N$, we have
	$\int_{I(t)}e^{is \cdot x}\,ds\in L^2(\R^N)$.
	Since the Fourier transform is an isometry,
	$\calF(f\chi_{I_a^N})\to\calF(f)$ ($a\to\infty$) in $L^2(\R^N)$.
	Thus,	letting $a\to\infty$ in~\eqref{eq:FourRN},
	we obtain, for all $f\in L^2(\R^N)$ and $t\in\R^N$,
	\begin{align}\label{eq:FourRNafterlimit}
		\int_{I(t)} f(x)\,dx
		&=\frac{1}{(2\pi)^{N/2}}\int_{\R^N}\calF(f)(x)\int_{I(t)}e^{is\cdot x}\,ds\,dx.
	\end{align}
	If, in addition, $\calF(f)\in L^1(\R^N)$, then, since $|e^{is\cdot x}|=1$,
	we can apply Fubini's theorem to interchange the order of integration in
	\eqref{eq:FourRNafterlimit}, obtaining
	\[
		\int_{I(t)} f(x) \, dx = \int_{I(t)} \calF^* \calF(f)(s) \, ds.
	\]
	Furthermore, if we write $x=(x_1,x')$ and $t=(t_1,t')\in\R^N$,
	then by Fubini's theorem we have,
	\[
		\int_0^{t_1}\int_{I(t')}f(x_1, x')\,dx'\,dx_1
		=\int_0^{t_1}\int_{I(t')}\calF^*\calF(f)(x')\, dx'\,dx_1.
	\]
	Differentiating both sides with respect to $t_1$ at $t_1=x_1$,
	we obtain, for almost every $x'\in\R^{N-1}$,
	\[
		\int_{I(t')}f(x_1,x')\,dx'=\int_{I(t')}\calF^*\calF(f)(x')\,dx'.
	\]
	Repeating this differentiation $N-1$ times with respect to $t_2,\dots,t_N$
	at $t_2=x_2,\dots,t_N=x_N$,
	we conclude that $f(x)=\calF^*\calF(f)(x)$ for almost every $x\in\R^N$.
	Now, if $f\in C_c^\infty(\R^N)$, then by integration by parts
	we obtain the identity
	\[
		\calF\Bigl(\frac{\bd^{2N}f}{\bd x_1^2\cdots\bd x_N^2}\Bigr)
		=(-1)^N\prod_{i=1}^Nx_i^2\cdot\calF(f).
	\]
	Since the left-hand side is bounded and since $\calF(f)\in L^2(\R^N)$,
	it is easy to see that $\calF(f)\in L^1(\R^N)$.
	Since $C_c^\infty(\R^N)$ is dense in $L^2(\R^N)$,
	we conclude that the set of $f$ with $\calF(f)\in L^1(\R^N)$ is dense in $L^2(\R^N)$.
	Therefore, by the boundedness of $\calF$ and $\calF^*$, the desired identity
	$\calF^*\calF=I$ holds by continuity.
	Similarly, it can be shown that $\calF\calF^*=I$.
	Thus, Plancherel's theorem is demonstrated.
\end{proof}

\begin{bibdiv}
\begin{biblist}

\bib{Aronszajn50}{article}{
      author={Aronszajn, N.},
       title={Theory of reproducing kernels},
        date={1950},
        ISSN={0002-9947},
     journal={Trans. Amer. Math. Soc.},
      volume={68},
       pages={337\ndash 404},
      review={\MR{MR0051437 (14,479c)}},
}

\bib{PaulsenRaghupathi16}{book}{
      author={Paulsen, Vern~I.},
      author={Raghupathi, Mrinal},
       title={An introduction to the theory of reproducing kernel {H}ilbert
  spaces},
      series={Cambridge Studies in Advanced Mathematics},
   publisher={Cambridge University Press, Cambridge},
        date={2016},
      volume={152},
        ISBN={978-1-107-10409-9},
         url={https://doi.org/10.1017/CBO9781316219232},
      review={\MR{3526117}},
}

\bib{ReedSimon72}{book}{
      author={Reed, Michael},
      author={Simon, Barry},
       title={Methods of modern mathematical physics. {I}. {F}unctional
  analysis},
   publisher={Academic Press, New York-London},
        date={1972},
      review={\MR{0493419}},
}

\bib{Saitoh82}{article}{
      author={Saitoh, Saburou},
       title={Integral transforms in {H}ilbert spaces},
        date={1982},
        ISSN={0386-2194},
     journal={Proc. Japan Acad. Ser. A Math. Sci.},
      volume={58},
      number={8},
       pages={361\ndash 364},
         url={http://projecteuclid.org/getRecord?id=euclid.pja/1195515885},
      review={\MR{683264 (84a:47064)}},
}

\bib{Saitoh84a}{article}{
      author={Saitoh, Saburou},
       title={Hilbert spaces admitting reproducing kernels on the real line and
  related fundamental inequalities},
        date={1984},
        ISSN={0255-7568},
     journal={Riazi J. Karachi Math. Assoc.},
      volume={6},
       pages={25\ndash 31},
      review={\MR{MR788318 (86h:46044)}},
}

\bib{Saitoh88}{book}{
      author={Saitoh, Saburou},
       title={Theory of reproducing kernels and its applications},
      series={Pitman Research Notes in Mathematics Series},
   publisher={Longman Scientific \& Technical},
     address={Harlow},
        date={1988},
      volume={189},
        ISBN={0-582-03564-3},
      review={\MR{MR983117 (90f:46045)}},
}

\bib{Saitoh97}{book}{
      author={Saitoh, Saburou},
       title={Integral transforms, reproducing kernels and their applications},
      series={Pitman Research Notes in Mathematics Series},
   publisher={Longman},
     address={Harlow},
        date={1997},
      volume={369},
        ISBN={0-582-31758-4},
      review={\MR{MR1478165 (98k:46041)}},
}

\bib{SaitohSawano16}{book}{
      author={Saitoh, Saburou},
      author={Sawano, Yoshihiro},
       title={Theory of reproducing kernels and applications},
      series={Developments in Mathematics},
   publisher={Springer, Singapore},
        date={2016},
      volume={44},
        ISBN={978-981-10-0529-9; 978-981-10-0530-5},
         url={https://doi.org/10.1007/978-981-10-0530-5},
      review={\MR{3560890}},
}

\bib{Sarason94}{book}{
      author={Sarason, Donald},
       title={Sub-{H}ardy {H}ilbert spaces in the unit disk},
      series={University of Arkansas Lecture Notes in the Mathematical
  Sciences, 10},
   publisher={John Wiley \& Sons Inc.},
     address={New York},
        date={1994},
        ISBN={0-471-04897-6},
        note={A Wiley-Interscience Publication},
      review={\MR{MR1289670 (96k:46039)}},
}

\end{biblist}
\end{bibdiv}

\end{document}